\newtheorem{assump}[theorem]{MP Setting}
\newcommand{\R}{\mathbb{R}}
\newcommand{\F}{\mathbb{F}}
\newcommand{\dd}{\delta}
\newcommand{\tth}{\theta}
\newcommand{\bb}[1]{\mathbf{#1}}
\newcommand{\fl}{\mathrm{fl}}
\newcommand{\cO}{\mathcal{O}}
\crefname{algocf}{alg.}{algs.}
\Crefname{algocf}{Algorithm}{Algorithms}
\title{Rounding Error Analysis of Mixed Precision Block Householder QR Algorithms\thanks{Submitted to editors October 30, 2019, ac\funding{This work was performed under the auspices of the U.S. Department of Energy by Lawrence Livermore National Laboratory under Contract DE-AC52-07NA27344 and was supported by the LLNL-LDRD Program under Project No. 17-SI-004, LLNL-JRNL-795525.}}}
\author{L. Minah yang\thanks{Department of Applied Mathematics, University of Colorado Boulder
		(\email{lucia.yang@colorado.edu}).}
	\and Alyson Fox\thanks{Center for Applied Scientific Computing, Lawrence Livermore National Laboratory, Livermore, CA 94551
		(\email{fox33@llnl.gov},\email{sanders29@llnl.gov}).}
	\and Geoffrey Sanders\footnotemark[3]}
\begin{document}

\maketitle
\begin{abstract}
	Although mixed precision arithmetic has recently garnered interest for training dense neural networks, many other applications could benefit from the  speed-ups and lower storage cost if applied appropriately. 
	The growing interest in employing mixed precision computations motivates the need for rounding error analysis that properly handles behavior from mixed precision arithmetic.
	We develop mixed precision variants of existing Householder QR algorithms and show error analyses supported by numerical experiments.	
\end{abstract}
\section{Introduction}\label{sec:intro}
The accuracy of a numerical algorithm depends on several factors, including numerical stability and well-conditionedness of the problem, both of which may be sensitive to rounding errors, the difference between exact and finite precision arithmetic. 
Low precision floats use fewer bits than high precision floats to represent the real numbers and naturally incur larger rounding errors. 
Therefore, error attributed to round-off may have a larger influence over the total error and some standard algorithms may yield insufficient accuracy when using low precision storage and arithmetic.
However, many applications exist that would benefit from the use of low precision arithmetic and storage that are less sensitive to floating-point round-off error, such as training dense neural networks \cite{micikevicius2018mixed} or clustering or ranking graph algorithms \cite{vonLuxburg2007}.
As a step towards that goal, we investigate the use of mixed precision arithmetic for the QR factorization, a widely used linear algebra routine.
\par

Many computing applications today require solutions quickly and often under low size, weight, and power constraints, such as in sensor formation, where low precision computation offers the ability to solve many problems with improvement in all four parameters.
Utilizing mixed precision, one can achieve similar quality of computation as high precision and still achieve 
speed, size, weight, and power constraint improvements. 
There have been several recent demonstrations of computing using IEEE half precision (fp16) achieving around half an order to an order of magnitude improvement of these categories in comparison to single and double precision (fp32, fp64).
Additionally, there exist demonstrations that the power consumption improvement is similar
\cite{fagan2016powerwall}.
Modern accelerators (e.g., GPUs, Knights Landing, or Xeon Phi) are able to achieve this factor or better speedup improvements.
Several examples include:
(i)   2-4$\times$ speedup in solving dense large linear equations \cite{haidar2018iterative,haidar2019tensorcore},
(ii)  12$\times$ speedup in training dense neural networks,
and
(iii) 1.2-10$\times$ speedup in small batched dense matrix multiplication \cite{abdelfattah2019batched} (up to 26$\times$ for batches of tiny matrices).
Training deep artificial neural networks by employing lower precision arithmetic to various tasks such as multiplication \cite{Courbariaux2014Mult} and storage \cite{Courbariaux2014Storage} can easily be implemented on GPUs and are a common practice in some data science applications.\par

The low precision computing environments that we consider are \emph{mixed precision} settings, which are designed to imitate those of new GPUs that employ multiple precision types for certain tasks. 
For example, Tesla V100's TensorCores perform block Fused Multiply Add operations (bFMAs), where matrix products of fp16 input data can be computed up to $16\times$ faster than that of fp64.
Many existing rounding error analyses of linear algebra routines are built within what we call a \emph{uniform precision} setting, which is the assumption that all arithmetic operations and storage are performed via the same precision.
In this work, we develop mixed precision variants of existing Householder (HH) QR factorization algorithms and perform mixed precision error analysis.
This work focuses on analyzing a few algorithms that use fp16/fp32 as the low/high precision types, but the error analysis can be easily modified for different floating point types (such as bfloat16 in \cite{tagliavini2018floating}).
The standard HH QR algorithm (HQR) and its block variants that partition the columns (level-3 BLAS variant, see \cite{golub2013matrix,Higham2002}) and those that partition the rows (communication-avoiding algorithms of \cite{Demmel2012}) are presented in \cref{sec:algo}, then modified to support bFMAs and an ad hoc mixed precision setting that mimics NVIDIA TensorCores in \cref{sec:mpanalysis}.
Our key findings are that mixed precision error analyses produce tighter error bounds as supported by experiments in \cref{sec:NE}, algorithms that utilize level-3 BLAS operations can easily be modified to incorporate TensorCore bFMAs, and some block algorithms operate more robustly in mixed precision than non-block techniques in certain regimes.
\section{Background: Build up to rounding error analysis for inner products}\label{sec:background}
In this section, we introduce the basic motivations and tools for mixed precision rounding error analysis needed for the {\it QR factorization}.
A matrix $\bb{A} \in \R^{m \times n}$ for $m\geq n$ can be written as
\[\bb{A} = \bb{QR}=\begin{bmatrix}\bb{Q}_{1} & \bb{Q}_2\end{bmatrix} \begin{bmatrix}\bb{R}_{1} \\ \bb{0}_{m-n \times n}\end{bmatrix} = \bb{Q}_{1}\bb{R}_{1},\vspace{-0.3cm}
\]
where an orthogonal $\bb{Q}\in\R^{m\times m}$ and an upper trapezoidal $\bb{R}$ form a \emph{full} QR factorization, and $\bb{Q}_{1}\in\R^{m\times n},\bb{R}_{1}\in\R^{n\times n}$ form a \emph{thin} QR factorization.
If $\bb{A}$ is full rank then the columns of $\bb{Q}_{1}$ are orthonormal
and $\bb{R}_1$ is upper triangular.
In many applications, only a \emph{thin} decomposition is needed as it produces an orthonormal basis for the column space of $\bb{A}$ while requiring less computation and storage.
Suppose that $\hat{y}$ is the result of implementing an algorithm that approximates $y=f(x)$.
The forward error is $|\hat{y}-y|$, and the backward error is defined as $|\Delta x|$ or $\min |\Delta x|$ that satisfies $\hat{y}=f(x+\Delta x)$. 
We use the first definition of backward error for the remainder of this paper, which for the QR factorization is: $\|\bb{A}-\hat{\bb{Q}}\hat{\bb{R}}\|$.
While important definitions are stated explicitly in the text, Table~\ref{table:notation} serves to establish basic notation.
\begin{table}[H]
	\centering
	\begin{tabular}{|m{2.cm}|m{10.6cm}|c|}
		\hline
		Symbol & Definition & Section \\ \hline
		${\bb x}$, ${\bb A}$,$|\bb{x}|$ ,$|\bb{A}|$  & Vector, matrix, and absolute value of each component  & \ref{sec:background} \\
		$\|{\bf x}\|_p$, $\|\bb{A}\|_p$ & Vector, operator $p$-norms for $p=2$, and Frobenius norm when $p=F$. & \ref{sec:background}\\
		$\bb{x}[i], \bb{A}[i,j]$,$\;:$& $i^{th}$ element of $\bb{x}$, $i^{th}$ row and $j^{th}$ column element of $\bb{A}$, all indices& \ref{sec:background}\\
		$\bb{X}_{m\times n}$, $\bb{X}_{n}$ & $m$-by-$n$ or $n$-by-$n$ matrices for $\bb{X}$ in $\{\bb{0},\bb{I}\}$, $\bb{I}_{m\times n}=[\bb{I}_{n} \quad \bb{0}_{n \times (m-n)}]^{\top}$ &  \ref{sec:intro}\\
		$\hat{\bb{e}}_i$  & $i^{th}$ cardinal vector &  \ref{sec:intro}\\
		\hline
		$\bb{Q},\bb{R}$  & Factors resulting from Householder (HH) QR factorization algorithms  & \ref{sec:background}\\
		$\bb{P}_{\bb{v}}$, $\bb{P}_i$ & HH transformation corresponding to $\bb{v}$, $i^{th}$ HH transformation in HQR& \ref{sec:algo}\\
		$\bb{X}, \bb{W}, \bb{Y}$ & WY representation of succesive HH transformations, $\bb{X}=\bb{I}-\bb{W}\bb{Y}^{\top}$ & \\
		\hline
		$\fl(\bb{x})$, $\hat{\bb{x}}$ & Quantity $\bb{x}$ calculated from floating point operations & \ref{sec:background} \\
		$\mu$, $\eta$  & mantissa, exponent bits of a floating point number & \ref{sec:background} \\
		$b_q$, $t_q$, $u^{(q)}$ & base, precision, unit round-off for precision $q$,  $u^{(q)}:=\frac{1}{2}b_q^{1-t_q}$& \ref{sec:background}\\
		$\dd^{(q)}$ &Quantity bounded by: $|\dd^{(q)}| < u^{(q)}$ &  \ref{sec:background} \\
		$\gamma^{(q)}_{k}$,  $\tth^{(q)}_{k}$& $\frac{ku^{(q)}}{1-ku^{(q)}}$, Quantity bounded by: $|\tilde{\tth}^{(q)}_{k}|\leq\tilde{\gamma}^{(q)}_{k}$ &  \ref{sec:background} \\
		$\tilde{\gamma}^{(q)}_{k}$,  $\tilde{\tth}^{(q)}_{k}$& $\frac{cku^{(q)}}{1-cku^{(q)}}$ for small integer $c>0$, Quantity bounded by: $|\tth^{(q)}_{k}|\leq\gamma^{(q)}_{k}$ &  \ref{sec:background} \\
		\hline
	\end{tabular}
	\caption{Basic definitions and where they first appear.}
	\label{table:notation}
\end{table}
\subsection{Basic rounding error analysis of floating point operations}\label{sec:backgroundRE}
We use and analyze the IEEE 754 Standard floating point number systems, shown in \cref{table:ieee}.
Let $\F \subset \R$ denote the space of some floating point number system with base $b\in\mathbb{N}$, precision $t\in\mathbb{N}$, significand $\mu\in\mathbb{N}$, and exponent range $[\eta_{\text{min}}, \eta_{\text{max}}]\subset \mathbb{Z}$.
Then every element $y$ in $\F$ can be written as 
\begin{equation}
y = \pm \mu\times b^{\eta-t},
\label{eqn:FPbasic}
\end{equation} 
where $\mu$ is any integer in $[0,b^{t}-1]$ and $\eta$ is an integer in  $[\eta_{\text{min}}, \eta_{\text{max}}]$.
Although operations we use on $\R$ cannot be replicated exactly due to the finite cardinality of $\F$, we can still approximate the accuracy of analogous floating point operations (FLOPs).
We adopt the rounding error analysis tools described in \cite{Higham2002}, which allow a relatively simple framework for formulating error bounds for complex linear algebra operations. 
An analysis of FLOPs (see Theorem 2.2 \cite{Higham2002}) shows that the relative error is 
controlled by the unit round-off, $u:=\frac{1}{2}b^{1-t}$ in uniform precision settings. 
In mixed precision settings we denote the higher precision unit round-off with $u^{(h)}$ (h for high) and the lower precision unit round-off with $u^{(l)}$ (l for low).\par 
\vspace{-.3cm}
\begin{table}[H]
	\begin{tabular}{||l|c|c|c|c|c|c||} 
		\hline 
		Name & $b$ & $t$ & \# of exponent bits & $\eta_{\text{min}}$ & $\eta_{\text{max}}$ & unit round-off $u$ \\ \hline 
		fp16 (IEEE754 half)& 2 & 11 & 5 & -15 & 16  & {\tt 4.883e-04} \\ \hline 
		fp32 (IEEE754 single)& 2 & 24 & 8 & -127 & 128  & {\tt 5.960e-08} \\ \hline 
		fp64 (IEEE754 double)& 2 & 53 & 11 & -1023 & 1024 & {\tt 1.110e-16} \\ \hline 
	\end{tabular}
	\caption{IEEE754 formats and their primary attributes.} 
	\label{table:ieee}
\end{table}
\vspace{-.8cm}

Let `op' be any basic operation from the set OP $=\{+, -, \times, \div\}$ and let $x,y\in \R$.
The true value $(x\text{ op }y)$ lies in $\R$, and it is rounded using some conversion to a floating point number, $\fl(x\text{ op }y)$, admitting a rounding error. 
The IEEE 754 Standard requires \emph{correct rounding}, which rounds the exact solution $(x\text{ op }y)$ to the closest floating point number and, in case of a tie, to the floating point number that has a mantissa ending in an even number.
\emph{Correct rounding} gives us an assumption for the error model where a single basic floating point operation yields a relative error, $\dd$, bounded in the following sense:
\begin{equation}
\fl(x\text{ op }y) = (1 + \dd)(x\text{ op }y),\quad |\dd|\leq u, \quad \text{op}\in\{+, -, \times, \div\}. \label{eqn:singlefpe}
\end{equation}
We use \cref{eqn:singlefpe} as a building block in accumulating errors from successive FLOPs.
Successive operations introduce multiple rounding error terms, and keeping track of all errors is challenging.
Lemma \ref{lem:gamma} introduces a convenient and elegant bound that simplifies accumulation of rounding error. 
\begin{lemma}[Lemma 3.1 \cite{Higham2002}]
	\label{lem:gamma}
	Let $|\dd_i|<u$, $\rho_i =\pm 1$ for $i=1:k$, and $ku < 1$. Then, 
	\begin{equation}
	\prod_{i=1}^k (1+\dd_i)^{\rho_i} = 1 + \tth_{k},
	\qquad \mbox{where } |\tth_{k}|\leq \frac{ku}{1-ku}=:\gamma_{k}.
	\end{equation}
	$$\mbox{Additionally, we define $\tilde{\tth}_k$ that satisfies }|\tilde{\tth}_k| \leq \tilde{\gamma}_k,\mbox{ where } \tilde{\gamma}_{k} = \frac{cku}{1-cku} \mbox{ for a small integer, $c>0$.}$$
\end{lemma}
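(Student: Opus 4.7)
The plan is to establish the identity and the bound on $\theta_k$ jointly by induction on $k$, splitting each step into the two subcases $\rho_k = +1$ and $\rho_k = -1$.

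For the base case $k=1$: if $\rho_1 = +1$ then $\theta_1 = \delta_1$ and $|\theta_1| \le u \le u/(1-u) = \gamma_1$; if $\rho_1 = -1$ then $(1+\delta_1)^{-1} = 1 + \theta_1$ with $\theta_1 = -\delta_1/(1+\delta_1)$, and since $u < 1$ forces $|1+\delta_1| \ge 1-u > 0$, I get $|\theta_1| \le u/(1-u) = \gamma_1$.

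For the inductive step, assume $\prod_{i=1}^{k-1}(1+\delta_i)^{\rho_i} = 1 + \theta_{k-1}$ with $|\theta_{k-1}| \le \gamma_{k-1}$, and multiply by $(1+\delta_k)^{\rho_k}$. If $\rho_k = +1$, expanding gives $\theta_k = \theta_{k-1} + \delta_k + \theta_{k-1}\delta_k$; the triangle inequality together with the algebraic identity $\gamma_{k-1} + u(1+\gamma_{k-1}) = ku/(1-(k-1)u)$ yields $|\theta_k| \le ku/(1-(k-1)u) \le ku/(1-ku) = \gamma_k$, where the last inequality uses $(k-1)u \le ku$. If $\rho_k = -1$, then $\theta_k = (\theta_{k-1} - \delta_k)/(1+\delta_k)$, and bounding the numerator above by $\gamma_{k-1}+u$ and the denominator below by $1-u$ (which is positive since $ku<1$) gives $|\theta_k| \le (\gamma_{k-1}+u)/(1-u)$.

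The only non-routine step I anticipate is verifying $(\gamma_{k-1}+u)/(1-u) \le \gamma_k$ in the $\rho_k = -1$ subcase. I would clear denominators to reduce this inequality to the equivalent $(ku - (k-1)u^2)(1-ku) \le ku\bigl[1-ku+(k-1)u^2\bigr]$; expanding both sides and cancelling the matching terms $ku - k^2u^2$ and $k(k-1)u^3$ collapses the inequality to $-(k-1)u^2 \le 0$, which is immediate. This closes the induction and establishes the bound.
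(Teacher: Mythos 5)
Your proof is correct, and it follows the same induction-on-$k$ argument (with the two subcases $\rho_k=\pm1$ and the elementary inequality $(\gamma_{k-1}+u)/(1-u)\le\gamma_k$) as the standard proof of Lemma 3.1 in Higham's book, which the paper cites without reproducing. The remaining sentence of the statement concerning $\tilde{\theta}_k$ and $\tilde{\gamma}_k$ is only a definition and requires no proof.
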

In other words, $\tth_{k}$ represents the accumulation of rounding errors from $k$ successive operations, and it is bounded by $\gamma_{k}$.
In more complicated routines shown in later sections, we use the tilde notation ($\tilde{\gamma}_k$) to permit only keeping track of the leading order error terms.
Applying this lemma to the computation of $x+y+z$, where $x,y,z\in\R$, results in
\begin{equation}
\fl(x+y+z) = (1+\dd')\left((1+\dd) (x+y) +z\right) = (1+\tth_{2})(x+y) + (1+\tth_{1})z, \label{eqn:FPbasic4}
\end{equation}
where $|\dd|,|\dd'|<u$. 
Since $|\tth_{1}| \leq \gamma_{1} < \gamma_{2}$, we can further simplify \cref{eqn:FPbasic4} to
\begin{equation}
\fl(x+y+z) = (1+\tth'_{2})(x+y+z), \quad \mbox{where} \quad |\tth'_{2}| \leq \gamma_{2}, \label{eqn:FBbasic5}
\end{equation}
at the cost of a slightly larger upper bound. 
Note that both $|\tth_2|,|\tth'_2|$ are bounded above by $\gamma_2$.
Typically, error bounds formed in the fashion of \cref{eqn:FBbasic5} are converted to relative errors in order to put the error magnitudes in perspective.
The relative error bound for our example is
\begin{equation*}
|(x+y+z) - \fl(x+y+z)|\leq \gamma_{2}|x+y+z|,\;\; x+y+z\neq 0.
\end{equation*}

Although Lemma~\ref{lem:gamma} requires $ku<1$, we actually need $ku <\frac{1}{2}$ to maintain a meaningful relative error bound as this assumption implies $\gamma_k < 1$ and guarantees a relative error below 100\%. 
Since higher precision types have smaller unit round-offs, they can tolerate more successive FLOPs than lower precision floating types before reaching $\gamma_m=1$.
For example, the IEEE types introduced in \cref{table:ieee} meet this requirement at $1/2=2^{10}u^{(\text{fp16})}=2^{23}u^{(\text{fp32})}=2^{52}u^{(\text{fp64})}$.
Thus, accumulated rounding errors in lower precision types can lead to an instability with fewer operations in comparison to higher precision types and prompts us to evaluate whether existing algorithms can be naively adapted for mixed precision arithmetic.
\subsection{Rounding Error Example for the Inner Product}\label{sec:backgroundIP}
We now consider computing the inner product of two vectors to clearly illustrate how this situation restricts rounding error analysis in fp16. 
An error bound for an inner product of $m$-length vectors is
\begin{equation}
|\bb{x}^{\top}\bb{y} - \fl(\bb{x}^{\top}\bb{y})| \leq \gamma_{m} |\bb{x}|^{\top}|\bb{y}|, \quad \bb{x},\bb{y}\in\R^{m} \label{eqn:DDerr}
\end{equation}
as shown in \cite{Higham2002}.
Since vectors of length $m$ accumulate rounding errors that are bounded by $\gamma_{m}$, dot products of vectors computed in fp16 already face a 100\% relative error bound when $m=1024$. \par

A simple numerical experiment shows that the standard deterministic error bound is too pessimistic and cannot be practically used to approximate rounding error for half precision arithmetic. 
In this experiment, we generated 2 million random fp16 vectors of length $1024$ from two random distributions: the standard normal distribution, $N(0,1)$, and the uniform distribution over $(0,1)$.
Half precision arithmetic was simulated by calling \cref{algo:simulate}, which was proven to be a faithful simulation in \cite{HighamPranesh2019b}, for every FLOP (multiplication and addition for the dot product).
The relative error in this experiment is formulated as the LHS in Equation \ref{eqn:DDerr} divided by $|\bb{x}|^{\top}|\bb{y}|$ and all operations outside of calculating $\fl(\bb{x}^{\top}\bb{y})$ are executed by casting up to fp64 and using fp64 arithmetic.
Table \ref{table:HPdoterr} shows some statistics from computing the relative error for simulated fp16 dot products.
\begin{table}[h]
	\vspace*{-0cm}
	\centering
	\begin{tabular}{||c|c|c|c||} 
		\hline
		Random Distribution & Average & \makecell{Stan. Dev.}& Maximum\\ \hline
		Standard normal &{\tt 1.621e-04} & {\tt 1.635e-04 } & {\tt 3.204e-03}\\ \hline
		Uniform $(0,1)$ & {\tt 6.904e-03}& {\tt 3.265e-03} & {\tt 2.447e-02}\\ \hline
	\end{tabular}
	\caption{Forward error statistics from experiment of dot products computed in simulated half precision.}
	\label{table:HPdoterr}
\end{table}

We see that the inner products of vectors sampled from the standard normal distribution have relative errors that do not deviate much from the unit round-off ($\cO$({\tt 1e-4})), whereas the vectors sampled from the uniform distribution tend to accumulate larger errors on average ($\cO$({\tt 1e-3})). 
Even so, the theoretical upper error bound of 100\% is far too pessimistic as the maximum relative error does not even meet 2\% in this experiment.
Recent work in developing probabilistic bounds on rounding errors of floating point operations (see \cite{Higham2019a,Ipsen2019}) have shown that the inner product relative backward error for the conditions used for this experiment is bounded by {\tt 5.466e-2} with probability 0.99.
\begin{algorithm2e}
	\DontPrintSemicolon 
	\KwIn{$\bb{x}^{(\text{fp16})}$, $\bb{y}^{(\text{fp16})}$, $f$ \hfill\textbf{Output: } $\bb{z}^{(\text{fp16})}=\fl_{\text{fp16}}(f(\bb{x}^{(\text{fp16})}, \bb{y}^{(\text{fp16})}))$}
	$[\bb{x}^{(\text{fp32})}, \bb{y}^{(\text{fp32})}] \gets$ {\tt castup}$([\bb{x}^{(\text{fp16})},\bb{y}^{(\text{fp16})}])$\tcp*{Convert input vars to fp32.}
	$\bb{z}^{(\text{fp32})} \gets \fl(f(\bb{x}^{(\text{fp32})}, \bb{y}^{(\text{fp32})}))$ \tcp*{Perform fp32 arithmetic.}
	$\bb{z}^{(\text{fp16})} \gets$ {\tt castdown}$(\bb{z}^{(\text{fp32})})$\tcp*{Convert result to fp16.}
	\Return $\bb{z}^{(\text{fp16})}$
	\caption{$\bb{z}^{(\text{fp16})} = {\tt simHalf}(f, \bb{x}^{(\text{fp16})}, \bb{y}^{(\text{fp16})})$. Given fp16 input variables $\bb{x},\bb{y}$, perform function $f\in$ OP$\cup \{{\tt dot\_product} \}$ in simulated fp16 arithmetic. }
	\label{algo:simulate}
\end{algorithm2e}

Most importantly, we need error analysis that allows flexibility in precision in order to better our understanding of the impact of rounding errors on computations done on emerging hardware (i.e. GPUs) that support mixed precision.
We start by introducing some additional rules from \cite{Higham2002} that build on \cref{lem:gamma} in \cref{lem:up}. 
These rules summarize how to accumulate errors represented by $\tth$'s and $\gamma$'s in a \emph{uniform precision} setting.

\begin{lemma}
\label{lem:up}
For any positive integer $k$, let $\tth_{k}$ denote a quantity bounded according to $|\tth_{k}|\leq \frac{k u }{1-ku} =:\gamma_{k}$. The following relations hold for positive integers $j,n$ and nonnegative integer $k$.
Arithmetic operations between bounded terms, $\tth_{k}$'s, are: 
\begin{equation}
(1+\tth_{k})(1+\tth_{j})=(1+\tth_{k+j})\qquad \mbox{and} \qquad\frac{1+\tth_{k}}{1+\tth_{j}} =
\begin{cases}
	1+\tth_{k+j},& j \leq k\\
	1+\tth_{k+2j},& j > k\\
\end{cases} .
\end{equation}
If $\rm{max}(j,k) u \leq \frac{1}{2}$ and $n \leq \frac{1}{uk}$, the operations on the bounds, $\gamma$'s, are:
	\begin{align*}
	\gamma_{k}\gamma_{j} \leq \gamma_{\rm{min}(k,j)}&,\qquad n\gamma_{k} \leq \gamma_{nk},\\
	\gamma_{k} + u \leq \gamma_{k+1}&,\qquad \gamma_{k}+\gamma_{j}+\gamma_{k}\gamma_{j} \leq \gamma_{k+j}.
	\end{align*}
Note that all the rules hold when replaced by $\tilde{\gamma}$'s, but result in looser bounds.
\end{lemma}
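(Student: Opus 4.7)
The plan is to verify each identity or inequality separately, splitting them into two groups. The multiplicative identities (the first block, about $\theta$'s) follow essentially from unpacking the definition of $\theta_k$ as an expression of the form $\prod_{i=1}^k(1+\delta_i)^{\rho_i}-1$ produced by Lemma \ref{lem:gamma}. The bound inequalities (the second block, about $\gamma$'s) are direct algebraic manipulations of the rational function $\gamma_k = ku/(1-ku)$, where the hypothesis $\max(j,k)u\leq 1/2$ ensures all denominators we encounter are positive.

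For $(1+\theta_k)(1+\theta_j)=1+\theta_{k+j}$, I would write each factor as a product of signed rounding factors $(1+\delta_i)^{\rho_i}$; concatenating a $k$-fold and $j$-fold product yields a $(k+j)$-fold product, and Lemma \ref{lem:gamma} reinterprets this as $1+\theta_{k+j}$. For the quotient $(1+\theta_k)/(1+\theta_j)$, I would likewise write it as $\prod_{i=1}^{k+j}(1+\delta_i)^{\rho_i}$ where the exponents from the $\theta_j$ piece are negated. When $j\leq k$, Lemma \ref{lem:gamma} applies directly to give the bound $\gamma_{k+j}$. When $j>k$, the safer route is to work from the identity $(1+\theta_k)/(1+\theta_j) = 1 + (\theta_k-\theta_j)/(1+\theta_j)$, bound $|\theta_k-\theta_j|\leq \gamma_k+\gamma_j$ and $|1+\theta_j|\geq 1-\gamma_j$, and then verify by cross-multiplication that $(\gamma_k+\gamma_j)/(1-\gamma_j)\leq \gamma_{k+2j}$ under the stated assumptions.

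For the four inequalities on $\gamma$'s, I would just compute. The inequality $n\gamma_k\leq \gamma_{nk}$ reduces to $(1-nku)\leq (1-ku)$, which holds whenever $nku<1$ (justifying the hypothesis $n\leq 1/(uk)$). The inequality $\gamma_k+u\leq \gamma_{k+1}$ follows from $\gamma_{k+1}-\gamma_k = u/[(1-(k+1)u)(1-ku)]\geq u$. For $\gamma_k\gamma_j\leq \gamma_{\min(k,j)}$, assume without loss of generality that $j\leq k$; then $\gamma_k\gamma_j = \tfrac{kju^2}{(1-ku)(1-ju)}$, and since $ku\leq 1/2$ implies $\tfrac{ku}{1-ku}\leq 1$, the product is bounded by $\gamma_j$. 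Finally, $\gamma_k+\gamma_j+\gamma_k\gamma_j\leq \gamma_{k+j}$ has a slick derivation: add $1$ to both sides and observe that the left side factors as $(1+\gamma_k)(1+\gamma_j) = 1/[(1-ku)(1-ju)]$ while the right side becomes $1/(1-(k+j)u)$, so the inequality reduces to $(1-ku)(1-ju)\geq 1-(k+j)u$, which is immediate since the excess term is $kju^2\geq 0$.

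I expect the only subtle point to be the case $j>k$ of the quotient identity: the reason $\theta_{k+j}$ is insufficient (and must be replaced by $\theta_{k+2j}$) is essentially that the denominator bound $1-\gamma_j$ costs us an additional factor of size $\gamma_j$ that must be absorbed. Everything else is routine. For the extension to $\tilde{\gamma}_k$ (with the constant $c$), the same proofs go through line-by-line because multiplying $u$ by $c$ preserves all the monotonicity and product properties used above, at the expense of slightly larger constants in the resulting bounds.
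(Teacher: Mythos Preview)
The paper does not prove this lemma; it is stated as a collection of rules imported from \cite{Higham2002} (introduced immediately before the lemma as ``additional rules from \cite{Higham2002} that build on \cref{lem:gamma}''), with no proof given. Your proposal is correct and is essentially the standard verification one finds in Higham's book, so there is nothing substantive to compare against.

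One small remark on presentation: the paper phrases $\theta_k$ as \emph{any} quantity with $|\theta_k|\leq\gamma_k$, not necessarily one arising as $\prod_{i=1}^k(1+\delta_i)^{\rho_i}-1$. Your argument for the product identity and the $j\leq k$ quotient case invokes the concatenation-of-factors interpretation, which technically uses the narrower reading. This is harmless, since under the general reading both identities follow immediately from the $\gamma$-inequality $\gamma_k+\gamma_j+\gamma_k\gamma_j\leq\gamma_{k+j}$ that you also prove (for the product) and from the very same $(\gamma_k+\gamma_j)/(1-\gamma_j)$ computation you carry out for $j>k$ (which, when $j\leq k$, already gives $\gamma_{k+j}$ because then $(2j-k)u\leq ju<1$). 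So the content is fine; you may simply want to route the $\theta$ identities through the $\gamma$ bounds to match the paper's stated definition.
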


We define two mixed precision settings that we use in \cref{sec:mpanalysis}.
In \cref{sec:mp-3}, we present the block Fused Multiply-Add (bFMA) of NVIDIA's TensorCore (TC) technology, which computes matrix-matrix multiply and accumulate for $4$-by-$4$ blocks, and incorporate it into \cref{algo:blockHQR,algo:par_tsqr}.
Here, we introduce an ad hoc mixed precision setting (MP Setting) which we use in \cref{sec:mp-2}.
This is explicitly defined in \cref{assump:mp} and is a level-2 BLAS variant of the TC bFMA. 
Both mixed precision settings define how inner products are computed although the bFMA is only applicable to inner products within matrix products and uses fp16 and fp32 whereas our ad hoc mixed precision setting is applicable to all inner products with any two precision types.\par

Our analysis is concerned with accuracy and stability and leaves out timing results of various hardwares.
Note that TCs perform matrix-matrix multiply and accumulate up to 8 times faster than fp32, and up to 16 times faster than fp64 (see \cite{Markidis2018}).

The exact product in \cref{assump:mp} is the result of using full precision products when the low precision type is fp16 and the high precision type is fp32 as is in TC bFMAs.
As a quick proof, consider $x^{(\text{fp16})} = \pm\mu_x2^{\eta_x -11},y^{(\text{fp16})} = \pm\mu_y2^{\eta_y -11}$ where $\mu_x,\mu_y\in[0,2^{11}-1]$ and $\eta_x,\eta_y\in[-15,16]$, and note that the significand and exponent ranges for fp32 are $[0, 2^{24}-1]$ and $[-127,128]$.
Then the product in full precision is
\[x^{(\text{fp16})}y^{(\text{fp16})} = \pm\mu_x\mu_y 2^{\eta_x+\eta_y+2-24},\]
where  $\mu_x\mu_y \in[0,(2^{11}-1)^2] \subseteq [0,2^{24}-1]$ and $\eta_x+\eta_y +2\in[-28,34]\subseteq[-127,128]$.
Thus, when two fp16 numbers are multiplied and stored in fp32, there is no roundoff error, and the summation and the cast down operations are the only sources of rounding error in this inner product scheme if no underflow or overflow occurs at the final cast down step.
\begin{assump}
	\label{assump:mp}
	Let $l$ and $h$ each denote low and high precision types with unit round-off values $u^{(l)}$ and $u^{(h)}$, where $1 \gg u^{(l)} \gg u^{(h)} >0$ and $u^{(h)} \lesssim (u^{(l)})^2$.
	Consider an FMA operation for inner products that take vectors stored in precision $l$, compute products exactly, and sum the products in precision $h$. 
	Finally, the result is then cast back down to precision $l$.
\end{assump}
We now analyze the rounding error for the inner product scheme described in \cref{assump:mp} and hypothesize that the guaranteed accuracy for this mixed precision inner product should be better than that of the low precision inner product and worse than that of the high precision inner product.
Let $\bb{x},\bb{y}$ be $m$-length vectors stored in a low precision type ($\F_l^m$), $s_k$ be the exact $k^{th}$ partial sum, and $\hat{s}_k$ be $s_k$ computed with FLOPs.
Then the first three partial sums are,
\begin{align*}
\hat{s}_1 &= \fl (\bb{x}[1]\bb{y}[1]) = \bb{x}[1]\bb{y}[1],\quad \hat{s}_2 = \fl(\hat{s}_1 + \bb{x}[2]\bb{y}[2]) = \left(\bb{x}[1]\bb{y}[1]+ \bb{x}[2]\bb{y}[2]\right)(1+\dd_{1}^{(h)}),\\
\hat{s}_3 &= \fl(\hat{s}_2+\bb{x}[3]\bb{y}[3]) = \left[\left(\bb{x}[1]\bb{y}[1] + \bb{x}[2]\bb{y}[2]\right)(1+\dd_{1}^{(h)})  + \bb{x}[3]\bb{y}[3]\right](1+\dd_{2}^{(h)}).
\end{align*}
We see a pattern emerging. 
The error for an $m$-length vector dot product is then
\begin{equation}
\label{eqn:dperr_2}
\hat{s}_m = (\bb{x}[1]\bb{y}[1]+\bb{x}[2]\bb{y}[2])\prod_{k=1}^{m-1}(1+\dd_{k}^{(h)}) + \sum_{i=3}^m \bb{x}[i]\bb{y}[i]\left(\prod_{k=i-1}^{m-1}(1+\dd_{k}^{(h)})\right).
\end{equation}
Using Lemma \ref{lem:gamma}, we further simplify and form componentwise backward errors with
\begin{equation}
\fl(\bb{x}^{\top}\bb{y}) =(\bb{x}+\Delta\bb{x})^{\top}\bb{y} = \bb{x}^{\top}(\bb{y}+\Delta\bb{y})\quad\text{for }|\Delta \bb{x}| \leq \gamma^{(h)}_{m-1}|\bb{x}|,\;\; |\Delta \bb{y}|  \leq \gamma_{m-1}^{(h)}|\bb{y}|. \label{eqn:beforecd}
\end{equation}
Casting down to $\F_l$ without underflow or overflow results in backward errors, 
\begin{equation}
\text{\tt castdown}(\fl(\bb{x}^{\top}\bb{y})) = (\bb{x}+\Delta\bb{x}+\tilde{\Delta}\bb{x})^{\top}\bb{y} = \bb{x}^{\top}(\bb{y}+\Delta\bb{y}+\tilde{\Delta}\bb{y}), \label{eqn:aftercd}
\end{equation}
where $|\Delta\bb{x} + \tilde{\Delta} \bb{x}| \leq ((1+u^{(l)})(1+\gamma_{m-1}^{(h)})-1)|\bb{x}|$ and $|\Delta\bb{y}+\tilde{\Delta} \bb{y}| \leq ((1+u^{(l)})(1+\gamma_{m-1}^{(h)})-1)|\bb{y}|$.
Our hypothesis is indeed true since,
\[\gamma_m^{(h)}<u^{(l)}+\gamma_{m-1}^{(h)}+u^{(l)}\gamma_{m-1}^{(h)}<\gamma_{m}^{(l)},\]
where the lower and upper bounds are derived from the uniform precision error bound in \cref{eqn:DDerr}. 
\Cref{eqn:aftercd} shows us that the two larger error terms are from the higher precision summation, $\gamma_{m-1}^{(h)}$, and the cast down operation, $u^{(l)}$.
We can measure the impact of the cast down step relative to the length of the vector, $m$, and the disparity in the two precisions, $M_{l,h}:=u^{(l)}/u^{(h)}$, since these two factors determine which one of $u^{(l)}$ and $mu^{(h)}$ is the leading order term. 
We consider 3 cases.\\
\textbf{Case 1: ($m\ll M_{l,h}$)} The leading order term is $u^{(l)}$.
The mixed precision inner product has a smaller worst case error bound than the bound of the low precision inner product ($mu^{(l)}$).
On the other hand, $u^{(l)}$ is a larger upper bound than that of the high precision inner product ($mu^{(h)}=\frac{m}{M_{l,h}}u^{(l)}$).
It is likely that this factor of $M_{l,h}/m$ increase in the worst case error bound is unwanted.
\\
\textbf{Case 2: ($m = M_{l,h}$)}
Both terms are now leading order. 
This is still an improvement in comparison to the lower precision arithmetic as the error bound is reduced from $mu^{(l)}$ to $2u^{(l)}$.
Comparing this with the high precision inner product shows that the error bound has doubled from $mu^{(h)}$ to $2mu^{(h)}$.
\textbf{Case 3: ($m \gg M_{l,h}$)}
Now $\gamma_{m-1}^{(h)}$ is the leading order term. 
As in the above two cases, this is an improvement in the context of the low precision accuracy since the error has been reduced from $\gamma_m^{(l)}$ to $\gamma_{m/M_{l,h}}^{(l)}\equiv \gamma_m^{(h)}$. 
Since $u^{(l)} = M_{l,h}u^{(h)} \ll mu^{(h)}$, the mixed precision error bound has the same \emph{order} as the error bound from carrying the computation out in the higher precision. 
Therefore, we can expect about the same level of accuracy.\par
Finally, we present alternative representations of the error bound in \cref{eqn:aftercd},
\begin{align*}
(1+u^{(l)})(1+\gamma_{m-1}^{(h)})-1 &\leq \gamma_{M_{l,h}+m-1}^{(h)}=\gamma_{1+(m-1)/M_{l,h}}^{(l)}, \;\; M_{l,h} = u^{(l)}/u^{(h)},\\
(1+u^{(l)})(1+\gamma_{m-1}^{(h)})-1 &\leq  u^{(l)} + \gamma_{m-1}^{(h)} + \min\{u^{(l)}, \gamma_{m-1}^{(h)}\},\;\; \gamma_{m-1}^{(h)} < 1,
\end{align*}
where the rules from \cref{lem:up} were directly applied.
Both alternative bounds are only slightly larger than the original bound shown on the LHS and remain in the same order.
The first is useful when comparing against the low or the high precision, whereas the second keeps track of the error bounds in both precisions.
We summarize these ways of combining $\gamma$ terms of different precisions in \cref{lem:mp},
\begin{lemma}\label{lem:mp}
	For any nonnegative integers $k_l$, $k_h$ and some precision $q$ defined with respect to the unit round-off, $u^{(q)}$, define $\gamma^{(q)}_{k} := \frac{k u^{(q)} }{1-ku^{(q)}}$.
	Consider a low precision and a high precision where $1 \gg u^{(l)} \gg u^{(h)}>0$, and $k_l$, $k_h$ that satisfy $\max\{\gamma^{(h)}_{k_{h}},\gamma^{(l)}_{k_{l}}\} < 1/2$.
	Then the following rules help us accumulate $\gamma$'s of different precisions,
	\begin{align}
	\gamma^{(h)}_{k_{h}}\gamma^{(l)}_{k_{l}} &\leq \min\{\gamma^{(h)}_{k_{h}},\gamma^{(l)}_{k_{l}} \},\\ 
	(1+\tilde{\gamma}_{k_l}^{(l)})(1+\tilde{\gamma}_{k_h}^{(h)}) -1 &= \tilde{\gamma}_{k_l}^{(l)}+\tilde{\gamma}_{k_h}^{(h)}. \label{eqn:mpgamma}
	\end{align}
\end{lemma}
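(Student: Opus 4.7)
The plan is to handle the two claims separately, as they are essentially independent. Both follow from direct algebraic manipulation together with the regime assumptions $u^{(h)} \lesssim (u^{(l)})^2 \ll u^{(l)} \ll 1$ and $\max\{\gamma^{(h)}_{k_h},\gamma^{(l)}_{k_l}\}<1/2$.

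For the first inequality, I would first observe that the hypothesis $\gamma^{(q)}_{k_q} < 1/2$ implies in particular $0 \leq \gamma^{(q)}_{k_q} < 1$ for both $q \in \{l,h\}$. Since both factors are nonnegative, multiplying $\gamma^{(l)}_{k_l}$ by $\gamma^{(h)}_{k_h} < 1$ gives $\gamma^{(h)}_{k_h}\gamma^{(l)}_{k_l} \leq \gamma^{(l)}_{k_l}$, and symmetrically the product is bounded by $\gamma^{(h)}_{k_h}$. Taking the minimum of these two bounds delivers the stated inequality. This is essentially a mixed-precision version of the first displayed rule in \cref{lem:up}, and the argument is identical except that the two unit round-offs are now distinct.

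For the second claim, I would expand the product directly:
\begin{equation*}
(1+\tilde{\gamma}^{(l)}_{k_l})(1+\tilde{\gamma}^{(h)}_{k_h})-1 = \tilde{\gamma}^{(l)}_{k_l}+\tilde{\gamma}^{(h)}_{k_h}+\tilde{\gamma}^{(l)}_{k_l}\tilde{\gamma}^{(h)}_{k_h},
\end{equation*}
so the only thing to show is that the cross term can be absorbed into the sum on the right without leaving the $\tilde\gamma$-class. The cleanest route is to apply the first inequality of the lemma (replacing $\gamma$ with $\tilde\gamma$, which is legitimate because $\tilde\gamma$ has the same functional form with $u$ replaced by $cu$) to conclude $\tilde{\gamma}^{(l)}_{k_l}\tilde{\gamma}^{(h)}_{k_h} \leq \min\{\tilde{\gamma}^{(l)}_{k_l},\tilde{\gamma}^{(h)}_{k_h}\}$. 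Adding this to one of the leading terms at most doubles it, and since $\tilde\gamma$ is defined up to a small integer constant $c$, the resulting $2\tilde{\gamma}^{(q)}_{k_q}$ is again of the form $\tilde{\gamma}^{(q)}_{k_q}$ (with $c$ replaced by $2c$). Thus the cross term is absorbed and the equality holds in the sense of the $\tilde\gamma$ notation.

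The main obstacle I anticipate is not computational but interpretive: making explicit that the ``$=$'' sign in the second claim is an equality of bounded quantities within the tilde-class rather than a literal arithmetic identity. I would handle this by stating up front how the tilde notation is being used, and then the asymptotic dominance $u^{(h)} \lesssim (u^{(l)})^2$ makes the absorption of the cross term (which is $O(u^{(l)}u^{(h)}) = O((u^{(l)})^3)$ in the leading order) almost trivial. The remainder of the argument is just invoking the rules of \cref{lem:up} with suitably relabeled constants.
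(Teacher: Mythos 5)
Your proposal is correct and follows essentially the same route as the paper, which justifies the first inequality as the mixed-precision analogue of the rule $\gamma_k\gamma_j\leq\gamma_{\min(k,j)}$ from \cref{lem:up} and, in the remark immediately following the lemma, absorbs the cross term $\tilde{\gamma}_{k_l}^{(l)}\tilde{\gamma}_{k_h}^{(h)}$ into the small integer $c$ of the tilde notation precisely because the product is dominated by each factor. Your explicit elementary argument for the first part (each $\gamma<1$, so the product is bounded by either factor) and your care in flagging that the second claim is an identity within the $\tilde{\gamma}$-class are both consistent with the paper's reasoning.
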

Note that \cref{eqn:mpgamma} drops the term $\tilde{\gamma}_{k_l}^{(l)}\tilde{\gamma}_{k_h}^{(h)}$ since both $\tilde{\gamma}_{k_l}^{(l)}$ and $\tilde{\gamma}_{k_h}^{(h)}$ are larger than their product and this product can be swept into the small integer $c > 0$ implicitly included in the tilde notation.
Using these two mixed precision settings (TC bFMA and \cref{assump:mp}) in HQR algorithms results in casting down to the low precision at different parts of the algorithms.
In general, error bounds in the fashion of \cref{eqn:beforecd} correspond to rounding errors prior to cast down operations, and cast down operations introduce an additional error term as in and error bounds similar to \cref{eqn:aftercd}.\par

We have demonstrated a need for rounding error analysis that is accurate for mixed precision procedures and analyzed the inner product in an ad hoc mixed precision setting that mimics the TensorCore bFMA.
We will use this to analyze various HH QR factorization algorithms.
Algorithms and the general framework for the standard rounding error analysis for these algorithms are introduced in \cref{sec:algo}, and both are modified to meet different mixed precision assumptions in \cref{sec:mpanalysis}.
\section{Algorithms and existing round-off error analyses}\label{sec:algo}
We introduce the Householder QR factorization algorithm (HQR) in \cref{sec:HQR} and two block variants that use HQR within the block in \cref{sec:BQR,sec:TSQR}. 
The blocked HQR (BQR) in \cref{sec:BQR} partitions the columns of the target matrix and is a well-known algorithm that uses the WY representation of \cite{Bischof1987} that utilizes mainly level-3 BLAS operations.
In contrast, the Tall-and-Skinny QR (TSQR) in \cref{sec:TSQR} partitions the rows and takes a communication-avoiding divide-and-conquer approach that can be easily parallelized (see \cite{Demmel2007}).
We present the standard rounding error analysis of these algorithms (see \cite{Higham2002,Mori2012}) which will be tweaked for various mixed precision assumptions in \cref{sec:mpanalysis}.
\subsection{Householder QR (HQR)}\label{sec:HQR}
The HQR algorithm uses HH transformations to zero out elements below the diagonal of a matrix (see \cite{Householder1958}). 
We present this as zeroing out all but the first element of some vector, $\bb{x}\in\R^m$.
\begin{lemma}
	Given vector $\bb{x}\in\R^{m}$, there exist an HH vector , $\bb{v}$, and an HH constant, $\beta$, that define the HH transformation matrix, $\bb{P}_{\bb{v}}:=\bb{I}_{m} - \beta \bb{v}\bb{v}^{\top}$, such that $\bb{P}_{\bb{v}}$ zeroes out $\bb{x}$ below the first element. 
	The HH vector and constant are defined via
	\begin{equation}
	\sigma = -\rm{sign}(\bb{x}[1])\|\bb{x}\|_2, \quad  \bb{v} = \bb{x} -\sigma \hat{\bb{e}}_{1},\mbox{ and } \beta =\frac{2}{\bb{v}^{\top}\bb{v}}=-\frac{1}{\sigma\bb{v}[1]}.
	\label{eqn:HH} 
	\vspace*{-.3cm}
	\end{equation}
	The transformed vector, $\bb{P_vx}=\sigma\hat{\bb{e}_1}$, has the same 2-norm as $\bb{x}$ since $\bb{P}_{\bb{v}}=\bb{P}_{\bb{v}}^{\top}=\bb{P}_{\bb{v}}^{-1}$.
	\label{lem:hhvec}
\end{lemma}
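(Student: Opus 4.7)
My plan is a direct verification of the three claimed properties from the closed-form definition $\bb{P}_{\bb{v}} = \bb{I}_m - \beta\bb{v}\bb{v}^{\top}$. The argument hinges on a single key identity, $\bb{v}^{\top}\bb{v} = 2(\|\bb{x}\|_2^2 - \sigma\bb{x}[1])$, which I would obtain by expanding $(\bb{x}-\sigma\hat{\bb{e}}_{1})^{\top}(\bb{x}-\sigma\hat{\bb{e}}_{1})$ and using $\sigma^2 = \|\bb{x}\|_2^2$. The same expansion simultaneously yields $\bb{v}^{\top}\bb{x} = \|\bb{x}\|_2^2 - \sigma\bb{x}[1] = \tfrac{1}{2}\bb{v}^{\top}\bb{v}$, and since $\bb{v}[1] = \bb{x}[1]-\sigma$, I also get $-\sigma\bb{v}[1] = \|\bb{x}\|_2^2 - \sigma\bb{x}[1] = \tfrac{1}{2}\bb{v}^{\top}\bb{v}$. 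This last equation immediately establishes the equivalence $\frac{2}{\bb{v}^{\top}\bb{v}} = -\frac{1}{\sigma\bb{v}[1]}$ asserted in \cref{eqn:HH}.

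Next I would verify the zeroing property by computing $\beta(\bb{v}^{\top}\bb{x}) = \frac{2}{\bb{v}^{\top}\bb{v}}\cdot\tfrac{1}{2}\bb{v}^{\top}\bb{v} = 1$, so that
\[\bb{P}_{\bb{v}}\bb{x} = \bb{x} - \beta(\bb{v}^{\top}\bb{x})\bb{v} = \bb{x} - \bb{v} = \sigma\hat{\bb{e}}_{1},\]
which has all entries below the first equal to zero. Symmetry of $\bb{P}_{\bb{v}}$ is immediate since $\bb{v}\bb{v}^{\top}$ is symmetric. For orthogonality, I would expand $\bb{P}_{\bb{v}}^{\top}\bb{P}_{\bb{v}} = \bb{P}_{\bb{v}}^{2}$ as $\bb{I}_m - 2\beta\bb{v}\bb{v}^{\top} + \beta^2(\bb{v}^{\top}\bb{v})\bb{v}\bb{v}^{\top}$ and use $\beta(\bb{v}^{\top}\bb{v}) = 2$ to collapse the last two terms into $+2\beta\bb{v}\bb{v}^{\top}$, leaving $\bb{I}_m$. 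This gives $\bb{P}_{\bb{v}}^{-1} = \bb{P}_{\bb{v}}^{\top} = \bb{P}_{\bb{v}}$, and 2-norm preservation $\|\bb{P}_{\bb{v}}\bb{x}\|_2 = \|\sigma\hat{\bb{e}}_{1}\|_2 = |\sigma| = \|\bb{x}\|_2$ then follows at once from orthogonality.

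The only real obstacle is that every step implicitly requires $\bb{v} \neq \bb{0}$, which would fail precisely when $\bb{x}$ is already a scalar multiple of $\hat{\bb{e}}_{1}$. The sign choice $\sigma = -\mathrm{sign}(\bb{x}[1])\|\bb{x}\|_2$ makes $\bb{v}[1] = \bb{x}[1] + \mathrm{sign}(\bb{x}[1])\|\bb{x}\|_2$ a sum of like-signed terms and hence nonzero whenever $\bb{x}\neq\bb{0}$, which both eliminates the degeneracy and, as a numerical bonus, avoids catastrophic cancellation in forming $\bb{v}[1]$. I would record this sign convention up front and then let the algebraic identities above do the rest.
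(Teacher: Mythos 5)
Your verification is correct and complete: the key identity $\bb{v}^{\top}\bb{v} = 2(\|\bb{x}\|_2^2 - \sigma\bb{x}[1]) = 2\bb{v}^{\top}\bb{x} = -2\sigma\bb{v}[1]$ cleanly delivers the equivalence of the two formulas for $\beta$, the zeroing property via $\beta\bb{v}^{\top}\bb{x}=1$, and the involution property $\bb{P}_{\bb{v}}^2=\bb{I}_m$, and your remark on the sign choice guaranteeing $\bb{v}[1]\neq 0$ covers the only degenerate case. The paper itself offers no proof of this lemma, treating it as classical (citing Householder's original construction and \cite{Higham2002}), so there is nothing to compare against; your argument is the standard direct verification and would serve as a complete proof.
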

\subsubsection{HQR: Algorithm}
Given $\bb{A}\in\R^{m\times n}$ and Lemma \ref{lem:hhvec}, HQR is done by repeating the following processes until only an upper triangle matrix remains.
For $i = 1, 2, \cdots, n,$
\begin{enumerate}[Step 1)]
	\item Compute $\bb{v}$ and $\beta$ that zeroes out the $i^{th}$ column of $\bb{A}$ beneath $a_{ii}$ (see \cref{algo:hh_v2}), and
	\item Apply $\bb{P}_{\bb{v}}$ to the bottom right partition, $\bb{A}[i:m, i:n]$ (lines 4-6 of \cref{algo:hhQR}).
\end{enumerate}

Consider the following $4$-by-$3$ matrix example adapted from \cite{Higham2002}. 
Let $\bb{P}_{i}$ represent the $i^{th}$ HH transformation of this algorithm. 
\[\scriptstyle\bb{A} = \left[ \begin{array}{ccc}
\times & \times & \times \\
\times & \times & \times \\
\times & \times & \times \\
\times & \times & \times
\end{array}
\right]\xrightarrow{\bb{P}_{1}\bb{A}}\left[ \begin{array}{c|cc}
\times & \times & \times \\ \hline
0 & \times & \times \\
0 & \times & \times \\
0 & \times & \times
\end{array}
\right]
\xrightarrow{\bb{P}_{2}\bb{P}_{1}\bb{A}}\left[
\begin{array}{cc|c}
\times & \times & \times \\
0 & \times & \times \\ \hline
0 & 0 & \times \\
0 & 0 & \times 
\end{array} \right]
\xrightarrow{\bb{P}_{3}\bb{P}_{2}\bb{P}_{1}\bb{A}} \left[ \begin{array}{ccc}
\times & \times & \times \\
0 & \times & \times \\
0 & 0 & \times \\
0 & 0 & 0 
\end{array}\right]\]
The resulting matrix is the $\bb{R}$ factor, $\bb{R}:= \bb{P}_{3}\bb{P}_{2}\bb{P}_{1}\bb{A}$, and the $\bb{Q}$ factor for a full QR factorization is $\bb{Q}:=\bb{P}_{1}\bb{P}_{2}\bb{P}_{3}$ since $\bb{P}_{i}$'s are symmetric.
The thin factors for a general matrix $\bb{A}\in\R^{m\times n}$ are
\begin{equation}
\bb{Q}_{\text{thin}} = \bb{P}_{1} \cdots \bb{P}_{n}\bb{I}_{m\times n}\quad \text{and} \quad \bb{R}_{\text{thin}} = \bb{I}_{m\times n}^{\top}\bb{P}_{n}\cdots \bb{P}_{1}\bb{A}.
\end{equation}

\begin{algorithm2e}[H]
	\DontPrintSemicolon 
	\KwIn{$\bb{x}$ \hfill \textbf{Output: }$\bb{v}$, $\sigma$, and $\beta $}
	$\bb{v}\gets$ {\tt copy}($\bb{x}$)\\
	$\sigma \gets -\rm{sign}(\bb{x}[1])\|\bb{x}\|_2$\\
	$\bb{v}[1] \gets \bb{x}[1]-\sigma$ \\
	$\beta \gets -\frac{\bb{v}[1]}{\sigma}$\\
	${\bf v} \leftarrow {\bf v} / {\bf v}[1]$\\
	\Return $\beta$, $\bb{v}$, $\sigma$
	\caption{$\beta$, $\bb{v}$, $\sigma = {\tt hhvec}(\bb{x})$. Given a vector $\bb{x}\in\R^m$, return $\bb{v}\in\R^m$ and $\beta,\sigma\in\R$ that satisfy $(I-\beta \bb{v}\bb{v}^{\top})\bb{x} =\sigma\hat{\bb{e}}_{1}$ and $\bb{v}[1]=1$ (see \cite{LAPACK, Higham2002}).}
	\label{algo:hh_v2}
\end{algorithm2e}

\begin{algorithm2e}
	\DontPrintSemicolon
	\KwIn{$\bb{A}$ \hfill \textbf{Output: }$\bb{V}$,$\bm{\beta}$, $\bb{R}$}
	Initialize $\bb{V} \gets \bb{0}_{m\times n}$, $\bm{\beta} \gets \bb{0}_m$ \\
	
	\For{$i=1 : n$}{
		$\bb{v}, \beta, \sigma \gets$ {\tt hhvec}$(\bb{A}[i:\mathrm{end}, i])$ \tcc*{\Cref{algo:hh_v2}}
		$\bb{V}[i:\mathrm{end},i]$, $\bm{\beta}_i$,  $\bb{A}[i,i] \gets \bb{v}, \beta, \sigma$\\
		$\bb{A}[i+1:\mathrm{end}, i]\gets \mathrm{zeros}(m-i)$\\
		$\bb{A}[i:\mathrm{end}, i+1:\mathrm{end}]\gets \bb{A}[i:\mathrm{end}, i+1:\mathrm{end}] - \beta \bb{v} \bb{v}^{\top}\bb{A}[i:\mathrm{end}, i+1:\mathrm{end}]$
	}
	\Return $\bb{V}$, $\bm{\beta}$, $\bb{A}[1:n, 1:n]$
	\caption{$\bb{V}$, $\bm{\beta}$, $\bb{R} =$ {\tt HQR2}$(\bb{A})$. A Level-2 BLAS implementation of HQR. Given a matrix $\bb{A}\in\R^{m\times n}$ where $m\geq n$, return matrix $\bb{V}\in\R^{m\times n}$, vector $\bm{\beta}\in\R^{n}$, and upper triangular matrix $\bb{R}$. The orthogonal factor $\bb{Q}$ can be generated from $\bb{V}$ and $\bm{\beta}$.}
	\label{algo:hhQR}
\end{algorithm2e}
\subsubsection{HQR: Rounding Error Analysis}
Now we present an error analysis for \cref{algo:hhQR} by keeping track of the different operations of \cref{algo:hh_v2} and \cref{algo:hhQR}.
We follow the analysis of \cite{Higham2002} and modify it for the variant where $\bb{v}[1]$ is set to $1$.
The goal of this section is to present the basic steps of the standard error analysis for HQR so that we modify them easily in \cref{sec:mpanalysis} for different mixed precision settings.
\paragraph{Calculating the $i^{th}$ HH vector and constant} 
In \cref{algo:hhQR}, we compute the HH vector and constant by using \cref{algo:hh_v2} to $\bb{A}[i:m,i]$.
For now, consider zeroing out any vector $\bb{x}\in\R^m$ below its first component with an HH transformation.
We first calculate $\sigma$ as is implemented in line 2 of \cref{algo:hh_v2}.
\begin{equation}
\label{eqn:sigma}
\fl(\sigma) = \hat{\sigma} = \rm{fl}(-\rm{sign}(\bb{x}[1])\|\bb{x}\|_2) = \sigma + \Delta \sigma,\quad |\Delta\sigma| \leq \gamma_{m+1}|\sigma|.
\end{equation}
Note that the backward error incurred here accounts for an inner product of a vector in $\R^{m}$ with itself and a square root operation to get the 2-norm. 
Let $\bb{v}'[1]\equiv \bb{x}[i]-\sigma$, the penultimate value $\bb{v}[1]$ held. 
The subtraction adds a single additional rounding error via
\begin{equation}
	\fl(\bb{v}'[1]) =\bb{v}'[1] + \Delta \bb{v}'[1] = (1+\dd) (\bb{x}[i]-\sigma-\Delta\sigma)= (1+\tth_{m+2})\bb{v}'[1]
\end{equation}
where the last equality is granted because the sign of $\sigma$ is chosen to prevent cancellation.
Since \cref{algo:hh_v2} normalizes the HH vector so that its first component is $1$, the remaining components of $\bb{v}$ are divided by $\fl(\tilde{\bb{v}}_1)$ incurring another single rounding error.
As a result, the components of $\bb{v}$ computed with FLOPs have error $\fl(\bb{v}[j])	= \bb{v}[j] + \Delta \bb{v}[j]$ where 
\begin{equation}
|\Delta \bb{v}[j]|\leq \gamma_{1+2(m+2)}|\bb{v}[j]| =\tilde{\gamma}_{m}|\bb{v}[j]|\quad j=2:m-i+1,\label{eqn:vbound}
\end{equation}
and $|\Delta {\bf v}[1]| = 0$.
Since $1+2(m+2) = \cO(m)$, we have swept that minor difference into the constant defined in the $\tilde{\gamma}$ notation.
Next, we consider the HH constant, $\beta$, as is computed in line 4 of \cref{algo:hh_v2}.
\begin{align}
\hat{\beta} = \fl\left(-\bb{v}'[1]/\hat{\sigma}\right) &=-(1+\dd)\frac{\bb{v}'[1]+\Delta \bb{v}'[1]}{\sigma + \Delta\sigma} = \frac{(1+\dd)(1+\tth_{m+2})}{(1+\tth_{m+1})}\beta \label{eqn:beta}\\
&= (1+\tth_{2m+4})\beta= \beta + \Delta \beta,\text{ where } |\Delta\beta| \leq \tilde{\gamma}_{m} \beta\label{eqn:beta3}.
\end{align}
We have shown \cref{eqn:beta} to keep our analysis simple in \cref{sec:mpanalysis} and \cref{eqn:beta3} to show that the error incurred from calculating $\|\bb{x}\|_2$ accounts for the vast majority of the rounding error so far.
In iteration $i$, we replace $\bb{x}$ with $\bb{A}[i:m,i]\in\R^{m-i+1}$ and the $i^{th}$ HH constant and vector ($\hat{\beta}_i$,$\bb{v}_i$) both have errors bounded by $\tilde{\gamma}_{m-i+1}$.
\paragraph{Applying a Single HH Transformation}
Now we consider lines 4-6 of \cref{algo:hhQR}.
In iteration $i$, we set $\bb{A}[i+1:m,:]$ to zero and replace $\bb{A}[i,i]$ with $\sigma$ computed from \cref{algo:hh_v2}.
Therefore, we now need to calculate the errors for applying an HH transformation to the remaining columns, $\bb{A}[i:m, i+1:n]$ with the computed HH vector and constant.
This is the most crucial building block of the rounding error analysis for any variant of HQR because the $\bb{R}$ factor is formed by applying the HH transformations to $\bb{A}$ and the $\bb{Q}$ factor is formed by applying them in reverse order to the identity.
Both of the blocked versions in \cref{sec:BQR} and \cref{sec:TSQR} also require slightly different but efficient implementations of this step.
For example, BQR in \cref{algo:blockHQR} uses level-3 BLAS operations to apply multiple HH transformations at once whereas the variant of HQR in \cref{algo:hhQR} can only use level-2 BLAS operations to apply HH transformations.\par

A HH transformation is applied through a series of inner and outer products, since HH matrices are rank-1 updates of the identity. 
That is, computing  $\bb{P}_{\bb{v}}\bb{x}$ for any $\bb{x}\in\R^{m}$ is as simple as computing 
\begin{equation}
	\bb{y}:=\bb{P}_{\bb{v}}\bb{x} = \bb{x} - (\beta \bb{v}^{\top}\bb{x})\bb{v}.\label{eqn:effH}
\end{equation}
Let us assume that $\bb{x}$ is an exact vector and there were errors incurred in forming $\bb{v}$ and $\beta$. 
The errors incurred from computing $\bb{v}$ and $\beta$ need to be included in addition to the new rounding errors accumulating from the action of applying $\bb{P}_{\bb{v}}$ to a column.
In practice, $\bb{x}$ is any column in $\bb{A}^{(i-1)}[i+1:m, i+1:n]$, where the superscript $(i-1)$ indicates that this submatrix of $\bb{A}$ has already been transformed by $i-1$ HH transformations that zeroed out components below $\bb{A}[j,j]$ for $j = 1:i-1$.
We show the error for forming $\hat{\bb{w}}$ where $\bb{w}:=\beta(\bb{v}^{\top}\bb{x})\bb{v}$ and $\bb{v},\bb{x}\in\R^{m}$,
\begin{equation*}
\hat{\bb{w}} =\fl(\hat{\beta}\;\fl(\hat{\bb{v}}^{\top}\bb{x})\hat{\bb{v}})=(1+\tth_{m})(1+\dd)(1+\dd')(\beta+\Delta\beta)\left((\bb{v}+\Delta \bb{v})^{\top}\bb{x}\right)(\bb{v}+\Delta \bb{v}),
\end{equation*}
where $\tth_{m}$ is from computing the inner product $(\hat{\bb{v}}^{\top}\bb{x})$, and $\dd$ and $\dd'$ are from multiplying $\beta$, $\fl(\hat{\bb{v}}^{\top}\bb{x})$, and $\bb{\hat{v}}$.
The forward error is
$\hat{\bb{w}} = \bb{w} + \Delta \bb{w}$, where $|\Delta \bb{w}| \leq \tilde{\gamma}_m|\beta|\left(|\bb{v}|^{\top}|\bb{x}|\right)|\bb{v}|.$
Subtracting $\hat{\bb{w}}$ from $\bb{x}$ yields the HH transformation with forward error,
\begin{equation}
\fl(\hat{\bb{P_v}}\bb{x}) = \fl(\bb{x}-\bb{\hat{w}}) = (1+\dd)(\bb{x}-\bb{w}-\Delta \bb{w}) = \bb{y} + \Delta \bb{y} = (\bb{P_v} + \Delta \bb{P_v})\bb{x},\label{eqn:applyP}
\end{equation}
where $|\Delta \bb{y}| \leq u|\bb{x}| + \tilde{\gamma}_{m} |\beta||\bb{v}||\bb{v}|^{\top}|\bb{x}|$.
Using $\sqrt{2/\beta} = \|\bb{v}\|_2$, we form a normwise bound,
\begin{equation}
\|\Delta \bb{y}\|_2 \leq \tilde{\gamma}_{m}\|\bb{x}\|_2. \label{eqn:19.2c}
\end{equation}
Since $\Delta \bb{P_v}[i,j] = \frac{1}{\|\bb{x}\|_2^2}\Delta \bb{y}[i]\bb{x}[j]$, we can compute its Frobenius norm,
	\begin{equation}
	\|\Delta \bb{P_v}\|_F
	= \left(\sum_{i=1}^m\sum_{j=1}^m\left(\frac{1}{\|\bb{x}\|_2^2}\Delta \bb{y}[i]\bb{x}[j]\right)^2\right)^{1/2}
	=  \frac{\|\Delta \bb{y}\|_2}{\|\bb{x}\|_2} \leq \tilde{\gamma}_{m}\label{eqn:outer},
	\end{equation}
where the last inequality is a direct application of \cref{eqn:19.2c}.
\paragraph{Applying many successive HH transformations}
Consider applying a sequence of transformations in the set $\{\bb{P}_{i}\}_{i=1}^r\subset\R^{m\times m}$ to $\bb{x}\in\R^m$, where $\bb{P}_{i}$'s are all HH transformations computed with $\tilde{\bb{v}}_i$'s and $\hat{\beta_i}$'s.
This is directly applicable to HQR as $\bb{Q}=\bb{P}_{1}\cdots\bb{P}_{n}\bb{I}$ and $\bb{R} = \bb{Q}^{\top}\bb{A} = \bb{P}_{n}\cdots\bb{P}_{1}\bb{A}$.
\Cref{lem:3.7} is very useful for any sequence of transformations, where each transformation has a known bound.
We will invoke this lemma to prove \cref{lem:19.3}, and use it in future sections for other consecutive transformations.
\begin{lemma}\label{lem:3.7}
	If $\bb{X}_{j} + \Delta \bb{X}_{j} \in\R^{m\times m}$ satisfies $\|\Delta \bb{X}_{j}\|_F\leq \tau_j \|\bb{X}_{j}\|_2$ for all $j=1,\cdots,r$, then $$\left|\left|\prod_{j=1}^r (\bb{X}_{j} + \Delta \bb{X}_{j})- \prod_{j=1}^r\bb{X}_{j} \right|\right|_F\leq\left(-1+\prod_{j=1}^r(1+\tau_j)\right)\prod_{j=1}^r\|\bb{X}_{j}\|_2.$$
\end{lemma}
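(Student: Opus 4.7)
The plan is to prove this by induction on $r$. The base case $r=1$ is trivial since $\|\Delta \bb{X}_{1}\|_F \leq \tau_1 \|\bb{X}_{1}\|_2 = ((1+\tau_1)-1)\|\bb{X}_{1}\|_2$, matching the claimed bound exactly.

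For the inductive step, assume the statement holds for $r-1$ factors and set $\bb{P} := \prod_{j=1}^{r-1}\bb{X}_j$ and $\hat{\bb{P}} := \prod_{j=1}^{r-1}(\bb{X}_j+\Delta\bb{X}_j)$. The first move is the standard telescoping identity
\begin{equation*}
\hat{\bb{P}}(\bb{X}_r + \Delta\bb{X}_r) - \bb{P}\bb{X}_r = (\hat{\bb{P}} - \bb{P})\bb{X}_r + \hat{\bb{P}}\,\Delta\bb{X}_r,
\end{equation*}
which separates the new error from the accumulated one. Applying the triangle inequality together with the submultiplicative properties $\|\bb{A}\bb{B}\|_F \leq \|\bb{A}\|_F\|\bb{B}\|_2$ and $\|\bb{A}\bb{B}\|_F \leq \|\bb{A}\|_2\|\bb{B}\|_F$ yields
\begin{equation*}
\left\|\hat{\bb{P}}(\bb{X}_r + \Delta\bb{X}_r) - \bb{P}\bb{X}_r\right\|_F \leq \|\hat{\bb{P}}-\bb{P}\|_F\,\|\bb{X}_r\|_2 + \|\hat{\bb{P}}\|_2\,\|\Delta\bb{X}_r\|_F.
\end{equation*}

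The first term is controlled by the induction hypothesis. For the second term, I need an auxiliary bound on $\|\hat{\bb{P}}\|_2$; since $\|\Delta\bb{X}_j\|_2 \leq \|\Delta\bb{X}_j\|_F \leq \tau_j\|\bb{X}_j\|_2$, submultiplicativity of the operator norm gives $\|\hat{\bb{P}}\|_2 \leq \prod_{j=1}^{r-1}(1+\tau_j)\|\bb{X}_j\|_2$. Combining these with $\|\Delta\bb{X}_r\|_F \leq \tau_r\|\bb{X}_r\|_2$ produces the upper bound
\begin{equation*}
\left[\prod_{j=1}^{r-1}(1+\tau_j) - 1\right]\prod_{j=1}^{r}\|\bb{X}_j\|_2 + \tau_r\prod_{j=1}^{r-1}(1+\tau_j)\prod_{j=1}^{r}\|\bb{X}_j\|_2,
\end{equation*}
and factoring $\prod_{j=1}^{r}\|\bb{X}_j\|_2$ out collapses the bracket to $(1+\tau_r)\prod_{j=1}^{r-1}(1+\tau_j) - 1 = \prod_{j=1}^{r}(1+\tau_j) - 1$, closing the induction.

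There is no real obstacle here; the only subtlety is mixing Frobenius and operator norms carefully so that the submultiplicativity inequalities line up (the telescoped term $(\hat{\bb{P}}-\bb{P})\bb{X}_r$ must have $\bb{X}_r$ measured in the $2$-norm since the perturbation $\hat{\bb{P}}-\bb{P}$ is what the induction gives us in Frobenius norm, and symmetrically for the $\hat{\bb{P}}\Delta\bb{X}_r$ term). Everything else is bookkeeping on the product of $(1+\tau_j)$ factors.
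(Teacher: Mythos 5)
Your proof is correct, and it is precisely the standard inductive argument behind this result (the paper does not reprove the lemma but imports it from Higham's \emph{Accuracy and Stability of Numerical Algorithms}, where the same induction via the telescoping identity and the mixed bounds $\|\bb{A}\bb{B}\|_F\leq\|\bb{A}\|_F\|\bb{B}\|_2$ and $\|\bb{A}\bb{B}\|_F\leq\|\bb{A}\|_2\|\bb{B}\|_F$ is used). The bookkeeping on the $(1+\tau_j)$ factors and the auxiliary bound $\|\hat{\bb{P}}\|_2\leq\prod_{j=1}^{r-1}(1+\tau_j)\|\bb{X}_j\|_2$ are all handled correctly.
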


\begin{lemma}\label{lem:19.3}
	Consider applying a sequence of transformations $\bb{Q}=\bb{P}_{r}\cdots\bb{P}_{2}\bb{P}_1$ onto vector $\bb{x}\in\R^m$ to form $\hat{\bb{y}} =\fl(\hat{\bb{P}}_{r}\cdots\hat{\bb{P}}_{2}\hat{\bb{P}}_{1}\bb{x}),$
	where $\hat{\bb{P}}_{k}$'s are HH transformations constructed from $\hat{\beta}_k$ and $\hat{\bb{v}}_{k}$.
	These HH vectors and constants are computed via \cref{algo:hh_v2} and the rounding errors are bounded by \cref{eqn:beta3,eqn:vbound}.
	If each transformation is computed via \cref{eqn:effH}, then
	\begin{align}
	\hat{\bb{y}} &= \bb{Q} (\bb{x} +\Delta \bb{x}) = (\bb{Q} + \Delta \bb{Q})\bb{x} = \hat{\bb{Q}}\bb{x},\\
	\|\Delta \bb{y}\|_2 &\leq r\tilde{\gamma}_m\|\bb{x}\|_2,\;\; \|\Delta \bb{Q}\|_F\leq r\tilde{\gamma}_m .\label{eqn:19.3}
	\end{align}
\end{lemma}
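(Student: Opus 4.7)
The plan is to reduce Lemma \ref{lem:19.3} to a direct application of Lemma \ref{lem:3.7}, using the per-step error bound already established in \cref{eqn:outer} for a single HH transformation. The key observation is that although \cref{eqn:applyP} is phrased as a forward error on a particular vector, \cref{eqn:outer} has already upgraded this into a matrix perturbation statement of the form $\hat{\bb{P}}_{\bb{v}} = \bb{P}_{\bb{v}} + \Delta \bb{P}_{\bb{v}}$ with $\|\Delta \bb{P}_{\bb{v}}\|_F \leq \tilde{\gamma}_m$. That is exactly the shape of hypothesis Lemma \ref{lem:3.7} requires.

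First I would set up iteration notation: let $\bb{z}_0 := \bb{x}$ and $\bb{z}_j := \fl(\hat{\bb{P}}_j \bb{z}_{j-1})$ for $j = 1,\ldots,r$, so that $\hat{\bb{y}} = \bb{z}_r$. For each $j$, I would argue that we can write $\bb{z}_j = (\bb{P}_j + E_j)\bb{z}_{j-1}$ for some $E_j \in \R^{m\times m}$ whose Frobenius norm is bounded by $\tilde{\gamma}_m$. This matrix $E_j$ absorbs two distinct sources of error: (i) the error in constructing $\hat{\beta}_j$ and $\hat{\bb{v}}_j$, whose bounds come from \cref{eqn:beta3,eqn:vbound}, and (ii) the error incurred from the inner product, scalar multiplication, and vector subtraction in the evaluation of \cref{eqn:effH}. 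The combined bound $\|E_j\|_F \leq \tilde{\gamma}_m$ is precisely the content of \cref{eqn:outer}, where the small integer multiples in the tilde notation hide the $\cO(1)$ combinatorics.

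Second, with the bounds $\|\bb{P}_j\|_2 = 1$ and $\|E_j\|_F \leq \tilde{\gamma}_m$ in hand, I would invoke Lemma \ref{lem:3.7} with $\bb{X}_j = \bb{P}_j$, $\Delta \bb{X}_j = E_j$, and $\tau_j = \tilde{\gamma}_m$ to obtain
\begin{equation*}
\left\| \hat{\bb{Q}} - \bb{Q} \right\|_F = \left\| \prod_{j=1}^r (\bb{P}_j + E_j) - \prod_{j=1}^r \bb{P}_j \right\|_F \leq (1+\tilde{\gamma}_m)^r - 1 \leq r\tilde{\gamma}_m,
\end{equation*}
where in the last step I would use that $r\tilde{\gamma}_m < 1$ (so that the binomial expansion is dominated by the linear term) and absorb the resulting $\cO(r^2 \tilde{\gamma}_m^2)$ terms into the tilde constant. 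This establishes the second bound in \cref{eqn:19.3}. The forward error bound on $\hat{\bb{y}}$ then follows immediately from $\|\Delta \bb{y}\|_2 \leq \|\hat{\bb{Q}} - \bb{Q}\|_F \|\bb{x}\|_2$, and the backward error identity $\hat{\bb{y}} = \bb{Q}(\bb{x} + \Delta \bb{x})$ with $\|\Delta \bb{x}\|_2 \leq r \tilde{\gamma}_m \|\bb{x}\|_2$ follows from setting $\Delta \bb{x} := \bb{Q}^\top \Delta \bb{Q}\, \bb{x}$ and using orthogonality $\|\bb{Q}^\top\|_2 = 1$.

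The main obstacle is really bookkeeping rather than conceptual: one must carefully argue that the errors in $\hat{\beta}_j$ and $\hat{\bb{v}}_j$ (computed from possibly already-transformed intermediate data in the full HQR context, though here we take $\bb{x}$ as fixed and the $\hat{\bb{P}}_j$ as given) can be combined with the application error into one matrix perturbation $E_j$ whose Frobenius norm still fits under the single $\tilde{\gamma}_m$ bound. The cleanest route is to observe that \cref{eqn:applyP} already lumps these together, and that \cref{eqn:outer} converts this into the needed Frobenius bound by exploiting the rank-one structure of $\Delta \bb{P}_{\bb{v}}$; no further delicate calculation is required.
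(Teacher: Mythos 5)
Your proposal is correct and follows essentially the same route as the paper: both apply \cref{lem:3.7} with $\bb{X}_j=\bb{P}_j$, $\tau_j=\tilde{\gamma}_m$ (the paper uses $\tilde{\gamma}_{m-j+1}\leq\tilde{\gamma}_m$), exploit $\|\bb{P}_j\|_2=1$, and then bound $(1+\tilde{\gamma}_m)^r-1\leq r\tilde{\gamma}_m$ under the assumption $r\tilde{\gamma}_m\leq 1/2$. Your added detail on absorbing the construction and application errors into a single rank-one perturbation via \cref{eqn:outer}, and on extracting the backward error form via $\Delta\bb{x}=\bb{Q}^{\top}\Delta\bb{Q}\,\bb{x}$, simply makes explicit what the paper leaves implicit.
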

\begin{proof}
	Applying \cref{lem:3.7} directly to $\bb{Q}$ yields
	\[
		\|\Delta\bb{Q}\|_F = \left|\left|\prod_{j=1}^r (\bb{P}_{j} + \Delta \bb{P}_{j})- \prod_{j=1}^r\bb{P}_{j} \right|\right|_F\leq\left(-1+\prod_{j-1}^r(1+\tilde{\gamma}_{m-j+1})^r\right)\prod_{j=1}^r\|\bb{P}_{j}\|_2 \leq -1+(1+\tilde{\gamma}_m)^r,\]
	since $\bb{P}_{j}$'s are orthogonal and have unit 2-norm and $m-j+1 \leq m$.
	While we omit the details here, we can show that $(1+\tilde{\gamma}_m)^r-1 \leq r\tilde{\gamma}_m$ using the argument from \cref{lem:gamma} if $r\tilde{\gamma}_m \leq 1/2$.
\end{proof}
In this error analysis, the prevailing bound for errors at various stages of forming and applying an HH transformation is $\tilde{\gamma}_{m}$ where $m$ corresponds to the dimension of the transformed vectors.
In \cref{lem:19.3}, a factor of $r$ is introduced for applying $r$ HH transformations to form the term $r\tilde{\gamma}_m \approx rmu$. 
Therefore, we can expect that the columnwise norm error for a thin QR factorization should be $\cO(mnu)$ for a full rank matrix.
In \cref{thm:feHQR}, we formalize this by applying \cref{lem:19.3} directly and also show a conversion of columnwise bounds to a matrix norm bound,
\begin{equation*}
	\|\Delta \bb{R} \|_F = \left(\sum_{i=1}^n \|\Delta \bb{R}[:,i]\|_2^2\right)^{1/2} \leq \left(\sum_{i=1}^n n^2\tilde{\gamma}_{m}^2 \|\bb{A}[:,i]\|_2^2\right)^{1/2} = n\tilde{\gamma}_{m} \|\bb{A}\|_F.
\end{equation*}
We gather these results into \cref{thm:feHQR}.
\begin{theorem}
	\label{thm:feHQR}
	Let $\bb{A}\in\R^{m\times n}$ with $m\geq n$ have full rank, $n$. 
	Let $\hat{\bb{Q}}\in\R^{m\times n}$ and $\hat{\bb{R}}\in\R^{n\times n}$ be the thin QR factors of $\bb{A}$ obtained via \cref{algo:hhQR}.
	Then,
	\begin{align*}
	\hat{\bb{R}} &= \bb{R} + \Delta \bb{R} = \fl(\hat{\bb{P}}_n\cdots\hat{\bb{P}}_1 \bb{A}),\;\; \|\Delta \bb{R}[:,j]\|_2\leq n\tilde{\gamma}_{m} \|\bb{A}[:,j]\|_2,\;\; \|\Delta \bb{R}\|_F\leq n\tilde{\gamma}_{m} \|\bb{A}\|_F\\
	\hat{\bb{Q}} &= \bb{Q} + \Delta \bb{Q} = \fl(\hat{\bb{P}}_1\cdots\hat{\bb{P}}_n \bb{I}),\;\; \|\Delta \bb{Q}[:,j]\|_2\leq n\tilde{\gamma}_{m},\;\; \|\Delta \bb{Q}\|_F \leq n^{3/2} \tilde{\gamma}_{m}.
	\end{align*}
\end{theorem}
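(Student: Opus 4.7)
The plan is to reduce the theorem to column-by-column applications of \cref{lem:19.3} and then aggregate the columnwise bounds into the stated Frobenius-norm bounds. The key observation is that each column of $\hat{\bb{R}}$ (respectively $\hat{\bb{Q}}$) is produced by applying the sequence of $n$ computed Householder transformations to a single column of $\bb{A}$ (respectively to a cardinal vector), so \cref{lem:19.3} applies verbatim with $r=n$.

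First I would handle $\hat{\bb{R}}$. Fix column $j$ and observe that $\hat{\bb{R}}[:,j] = \fl(\hat{\bb{P}}_{n}\cdots \hat{\bb{P}}_{1}\,\bb{A}[:,j])$, since matrix multiplication by $\hat{\bb{P}}_n\cdots\hat{\bb{P}}_1$ acts column-wise. Invoking \cref{lem:19.3} with input vector $\bb{x}=\bb{A}[:,j]$ and $r=n$ yields directly $\|\Delta\bb{R}[:,j]\|_{2}\leq n\tilde{\gamma}_{m}\|\bb{A}[:,j]\|_{2}$. The Frobenius bound follows by the computation already displayed just before the theorem statement: $\|\Delta\bb{R}\|_{F}^{2}=\sum_{j=1}^{n}\|\Delta\bb{R}[:,j]\|_{2}^{2}\leq n^{2}\tilde{\gamma}_{m}^{2}\sum_{j}\|\bb{A}[:,j]\|_{2}^{2}=n^{2}\tilde{\gamma}_{m}^{2}\|\bb{A}\|_{F}^{2}$.

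For $\hat{\bb{Q}}$ the argument is essentially identical, with $\hat{\bb{e}}_{j}$ in place of $\bb{A}[:,j]$: $\hat{\bb{Q}}[:,j]=\fl(\hat{\bb{P}}_{1}\cdots\hat{\bb{P}}_{n}\hat{\bb{e}}_{j})$. Applying \cref{lem:19.3} with $\bb{x}=\hat{\bb{e}}_{j}$ (which has unit 2-norm) gives $\|\Delta\bb{Q}[:,j]\|_{2}\leq n\tilde{\gamma}_{m}$, and summing the squared columnwise bounds over the $n$ columns yields $\|\Delta\bb{Q}\|_{F}\leq \sqrt{n}\cdot n\tilde{\gamma}_{m}=n^{3/2}\tilde{\gamma}_{m}$.

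There is no real obstacle here; the work is bookkeeping on top of \cref{lem:19.3}. The only minor subtlety worth a sentence is that the $i$-th transformation $\hat{\bb{P}}_{i}$ only acts nontrivially on a trailing block of dimension $m-i+1$, so one could in principle replace $\tilde{\gamma}_{m}$ by the smaller $\tilde{\gamma}_{m-i+1}$ at each step; using the uniform $\tilde{\gamma}_{m}$ (together with $\|\bb{P}_{i}\|_{2}=1$) gives the cleanest statement without essentially weakening the bound. One also needs the mild sizing assumption $n\tilde{\gamma}_{m}\leq 1/2$ implicit in \cref{lem:19.3} so that the $(1+\tilde{\gamma}_{m})^{n}-1\leq n\tilde{\gamma}_{m}$ simplification used there is valid; under the standing assumption that $mu<1$ for the working precision, this is consistent with the regimes considered throughout the paper.
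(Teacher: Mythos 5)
Your proposal is correct and follows essentially the same route as the paper: the text preceding the theorem states that it is obtained "by applying \cref{lem:19.3} directly" columnwise (with $r=n$, to $\bb{A}[:,j]$ for $\hat{\bb{R}}$ and to cardinal vectors for $\hat{\bb{Q}}$) and then aggregating via the displayed sum-of-squares conversion to the Frobenius norm, which is exactly your argument.
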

In future sections, we show the forward error columnwise bounds for each factor which can be easily converted to matrix norm bounds.
The numerical experiments in \cref{sec:NE} measure backward errors with $\|\hat{\bb{Q}}\hat{\bb{R}}-\bb{A}\|_F$ and the orthogonality of the $\bb{Q}$ factor with $\|\hat{\bb{Q}}^{\top}\hat{\bb{Q}}-\bb{I}\|_2$.
The content of this section shows the standard rounding error analysis in \cite{Higham2002} where some important stages are summarized in \cref{eqn:beta3,eqn:vbound,eqn:19.3}, which we will modify to different mixed precision settings in \cref{sec:mpanalysis}. 
These quantities account for various forward and backward errors formed in computing essential components of HQR, namely the HH constant and vector, as well as normwise errors of the action of applying HH transformations.
In the next sections, we present blocked variants of HQR that use \cref{algo:hhQR}.
\subsection{Block HQR with partitioned columns (BQR)}\label{sec:BQR}
We refer to the blocked variant of HQR where the columns are partitioned as BQR. 
Note that this section relies on the WY representation described in \cite{Bischof1987} instead of the storage-efficient version of \cite{Schreiber1989}, even though both are known to be just as numerically stable as HQR.
\subsubsection{The WY Representation}
A convenient matrix representation that accumulates $r$ HH reflectors is known as the WY representation (see \cite{Bischof1987,golub2013matrix}).
\Cref{lem:WY} shows how to update a rank-$j$ update of the identity, $\bb{Q}^{(j)}$, with an HH transformation, $\bb{P}$, to produce a rank-$(j+1)$ update of the identity, $\bb{Q}^{(j+1)}$. 
With the correct initialization of $\bb{W}$ and $\bb{Y}$, we can build the WY representation of successive HH transformations as shown in \Cref{algo:buildWY}. 
This algorithm assumes that the HH vectors, $\bb{V}$, and constants, $\bm{\beta}$, have already been computed.
Since the $\bb{Y}$ factor is exactly $\bb{V}$, we only need to compute the $\bb{W}$ factor.
\begin{lemma}\label{lem:WY}
	Suppose $\bb{X}^{(j)}=\bb{I}-\bb{W}^{(j)}\bb{Y}^{(j)\top}\in\R^{m\times m}$ is an orthogonal matrix with $\bb{W}^{(j)},\bb{Y}^{(j)}\in\R^{m\times j}$.
	Let us define $\bb{P}=\bb{I}-\beta\bb{vv}^{\top}$ for some $\bb{v}\in\R^m$ and let $\bb{z}^{(j+1)}=\beta\bb{X}^{(j)}\bb{v}$.
	Then, \[\bb{X}^{(j+1)} = \bb{X}^{(j)}\bb{P} = \bb{I} - \bb{W}^{(j+1)}\bb{Y}^{(j+1)\top}, \]where $ \bb{W}^{(j+1)} =[\bb{W}^{(j)}|\bb{z}]$ and $ \bb{Y}^{(j+1)}=[\bb{Y}^{(j)}|\bb{v}]$ are each $m$-by-$(j+1)$. 
\end{lemma}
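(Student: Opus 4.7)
The proof is a direct algebraic expansion, so the plan is just to lay out that computation cleanly and then observe the block structure.

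First I would expand the product $\bb{X}^{(j)}\bb{P}$ using the rank-$j$ representation of $\bb{X}^{(j)}$ and the rank-$1$ form of $\bb{P}$:
\begin{equation*}
\bb{X}^{(j)}\bb{P} = \bigl(\bb{I} - \bb{W}^{(j)}\bb{Y}^{(j)\top}\bigr)\bigl(\bb{I}-\beta \bb{v}\bb{v}^{\top}\bigr)
= \bb{I} - \bb{W}^{(j)}\bb{Y}^{(j)\top} - \beta\bigl(\bb{I}-\bb{W}^{(j)}\bb{Y}^{(j)\top}\bigr)\bb{v}\bb{v}^{\top}.
\end{equation*}
The second step is to recognize that the trailing factor of $\bb{v}\bb{v}^{\top}$ is preceded exactly by $\beta\bb{X}^{(j)}\bb{v}$, which by the hypothesis of the lemma is the definition of $\bb{z}$ (which I will write as $\bb{z}^{(j+1)}$ to match the statement). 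Substituting gives
\begin{equation*}
\bb{X}^{(j)}\bb{P} = \bb{I} - \bb{W}^{(j)}\bb{Y}^{(j)\top} - \bb{z}\bb{v}^{\top}.
\end{equation*}

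The third and final step is to repackage the two rank updates into a single low-rank update by block-partitioning. Since
\begin{equation*}
\bb{W}^{(j)}\bb{Y}^{(j)\top} + \bb{z}\bb{v}^{\top} = \bigl[\bb{W}^{(j)} \mid \bb{z}\bigr]\bigl[\bb{Y}^{(j)} \mid \bb{v}\bigr]^{\top},
\end{equation*}
we obtain $\bb{X}^{(j+1)} = \bb{I} - \bb{W}^{(j+1)}\bb{Y}^{(j+1)\top}$ with the claimed block definitions of $\bb{W}^{(j+1)}$ and $\bb{Y}^{(j+1)}$, each of size $m\times(j+1)$.

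There is essentially no obstacle here; the only thing worth flagging is that the statement asserts $\bb{X}^{(j)}$ is orthogonal but the derivation never actually uses orthogonality of $\bb{X}^{(j)}$ (only its WY form) nor the Householder property of $\bb{P}$ beyond its rank-$1$ update form. Orthogonality of $\bb{X}^{(j+1)}$ does follow automatically because it is a product of orthogonal matrices, which one could note as a brief remark after the main identity to justify induction carrying the orthogonality hypothesis forward when the lemma is iterated in \cref{algo:buildWY}.
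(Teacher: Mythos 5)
Your proof is correct and is the standard direct expansion used for this result in the references the paper cites (\cite{Bischof1987,golub2013matrix}); the paper itself states the lemma without proof, and your computation --- expanding $(\bb{I}-\bb{W}^{(j)}\bb{Y}^{(j)\top})(\bb{I}-\beta\bb{v}\bb{v}^{\top})$, identifying $\beta\bb{X}^{(j)}\bb{v}=\bb{z}$, and repackaging the two low-rank terms as $[\bb{W}^{(j)}\mid\bb{z}][\bb{Y}^{(j)}\mid\bb{v}]^{\top}$ --- is exactly that argument. Your closing remark is also accurate: orthogonality of $\bb{X}^{(j)}$ and the Householder structure of $\bb{P}$ are not needed for the algebraic identity, only for propagating orthogonality when the update is iterated in \cref{algo:buildWY}.
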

\begin{algorithm2e}
	\DontPrintSemicolon 
	\KwIn{$\bb{V}\in\R^{m \times r}$, $\bm{\beta}\in\R^{r}$ where $m > r$.}	
	\KwOut{$\bb{W}$} 
	Initialize: $\bb{W}:=\bm{\beta}_1\bb{V}[:,1]$.\tcc*{$\bb{Y}$ is $\bb{V}$.}
	\For{$j=2:r$}{
		$\bb{z}\gets \bm{\beta}_j \left[\bb{V}[:,j] - \bb{W}\left(\bb{V}[:,1:j-1]^{\top}\bb{V}[:,j]\right)\right]$\\
		$\bb{W} \gets [\bb{W}\quad \bb{z}]$ \tcc*{Update $\bb{W}$ to an $m$-by-$j$ matrix.}
	}
	\Return $\bb{W}$
	\caption{$\bb{W},\bb{Y}\gets {\tt buildWY}(V, \bm{\beta})$: Given a set of Householder vectors $\{\bb{V}[:,i]\}_{i=1}^r$ and their corresponding constants $\{\bm{\beta}_i\}_{i=1}^r$, form the final $\bb{W}$ and $\bb{Y}$ factors of the WY representation of $\bb{P}_1\cdots \bb{P}_r$, where $\bb{P}_i := \bb{I}_m - \bm{\beta}_i\bb{v}_i\bb{v}_i^{\top}$}
	\label{algo:buildWY}
\end{algorithm2e}

In HQR, $\bb{A}$ is transformed into an upper triangular matrix $\bb{R}$ by identifying an HH transformation that zeroes out a column below the diagonal, then applying that HH transformation to the bottom right partition. 
For example, the $k^{th}$ HH transformation finds an $m-k+1$ sized HH transformation that zeroes out column $k$ below the diagonal and then applies it to the $(m-k+1)$-by-$(n-k)$ partition of the matrix $\bb{A}[k:m,k+1:n]$.
Since the $(k+1)^{th}$ column is transformed by the $k^{th}$ HH transformation, this algorithm must be executed serially as shown in \cref{algo:hhQR}.
The highest computational burden at each iteration falls on \cref{algo:hhQR} line 6, which requires Level-2 BLAS operations when computed efficiently. \par

In contrast, BQR replaces this step with Level-3 BLAS operations by partitioning $\bb{A}$ into blocks of columns.
Let $\bb{A} = [\bb{C}_1 \cdots  \bb{C}_N]$ where $\bb{C}_1,\cdots,\bb{C}_{N-1}$ are each $m$-by-$r$, and $\bb{C}_N$ holds the remaining columns.
The $k^{th}$ block, $\bb{C}_k$, is transformed with HQR (\cref{algo:hhQR}), and the WY representation of these $r$ successive HH transformations is constructed as in \cref{algo:buildWY}.
We write the WY update as
\begin{equation}
	\bb{X}_k = \bb{I}_m -\bb{W}_{k}\bb{Y}_{k}^{\top} = \bb{P}_k^{(1)}\cdots\bb{P}_{k}^{(r)}.
\end{equation}
Thus far, \cref{algo:hhQR,algo:buildWY} are rich in Level-2 BLAS operations.
Next, $\bb{I} -\bb{Y}_{k}\bb{W}_{k}^{\top}$ is applied to $[\bb{C}_2 \cdots  \bb{C}_N]$ with two Level-3 BLAS operations as shown in line 5 of \cref{algo:blockHQR}.
BQR performs approximately $1-\cO(1/N)$ fraction of its FLOPs in Level-3 BLAS operations (see section 5.2.3 of \cite{golub2013matrix}), and can reap the benefits from the accelerated bFMA feature of TensorCores. 
Note that BQR does require strictly more FLOPs when compared with HQR, but these additional FLOPs are negligible in standard precision and do not impact the numerical stability.
A pseudoalgorithm for BQR is shown in \cref{algo:blockHQR} where we assume that $n=Nr$ to make our error analysis in \cref{sec:BQRerr} simple.
In practice, an efficient implementation might require $r$ to be a power of two or a product of small prime factors and result in a thinner $N^{th}$ block compared with the rest. 
This discrepancy is easily fixed by padding the matrix with zeros, a standard procedure for standard algorithms like the Fast Fourier Transform (FFT).
For any variable $x$ in $\{\bb{X},\bb{W}, \bb{Y}, \bb{z}, \beta, \bb{v}, \bb{P}\}$,  $x_k^{(j)}$ corresponds to the $j^{th}$ update for the $k^{th}$ block.
\begin{algorithm2e}
	\DontPrintSemicolon 
	\KwIn{$\bb{A}\in\R^{m \times n}$, $r\in\R$ where $r < n$.}
	\KwOut{$\bb{Q},\bb{R}$}
	$N=\frac{n}{r}$\\
	\tcp{Let $\bb{A} = [\bb{C}_{1} \cdots  \bb{C}_{N}]$ where all blocks except $\bb{C}_{N}$ are $m$-by-$r$ sized.}
	\For{$i=1:N$}{
		$\bb{V}_{i},\bm{\beta}_i,\bb{C}_{i}\gets$ {\tt hhQR}($\bb{C}_{i}$)\tcc*{\Cref{algo:hhQR}}
		$\bb{W}_{i}\gets $ {\tt buildWY}$(\bb{V}_{i},\bm{\beta}_i)$ \tcc*{\Cref{algo:buildWY}}
		$[\bb{C}_{i+1}\cdots\bb{C}_{N}]$ -= $\bb{V}_{i} \left(\bb{W}_{i}^{\top}[\bb{C}_{i+1}\cdots\bb{C}_{N}]\right) $ \tcc*{update the rest: BLAS-3}
	}
	\tcp{$\bb{A}$ has been transformed into $\bb{R}=\bb{Q}^{\top}\bb{A}$.}
	\tcp{Now build $\bb{Q}$ using level-3 BLAS operations.} 
	$\bb{Q}\gets \bb{I}$\tcc*{$\bb{I}_m$ if full QR, and $\bb{I}_{m\times n}$ if thin QR.}
	\For{$i=N:-1:1$}{
		$\bb{Q}[(i-1)r+1:m,(i-1)r+1:n]$-= $\bb{W}_i \left(\bb{V}_i^{\top}\bb{Q}[(i-1)r+1:m,(i-1)r+1:n]\right)$
	}
	\Return $\bb{Q},\bb{A}$
	\caption{\label{algo:blockHQR} $\bb{Q},\bb{R}\gets {\tt blockHQR}(\bb{A}, r)$: Perform HH QR factorization of matrix $\bb{A}$ with column partitions of size $r$.}
\end{algorithm2e}
\subsubsection{BQR: Rounding Error Analysis}\label{sec:BQRerr}
We now present the basic structure for the rounding error analysis for \cref{algo:blockHQR}, which consists of: 1) HQR, 2) building the W factor, and 3) updating the remaining blocks with the WY representation.
We have adapted the analysis from \cite{Higham2002} to fit this particular variant, and let $\hat{\bb{Q}}_{BQR},\hat{\bb{R}}_{BQR}$ denote the outputs from \cref{algo:blockHQR}.
First, we analyze the error accumulated from updating $\bb{X}_k^{(j-1)}$ to $\bb{X}_k^{(j)}$, which applies a rank-1 update via the subtraction of the outer product $\hat{\bb{z}}_{k}^{(j)}\hat{\bb{v}}_{k}^{(j)\top}$.
Since $\bb{z}_{k}^{(j)} = \beta_k^{(j)}\bb{X}_{k}^{(j-1)}\bb{v}_{k}^{(j)}$, this update requires a single HH transformation on the right side in the same efficient implementation that is discussed in \cref{eqn:effH},
\begin{equation}
\hat{\bb{X}}_k^{(j)} = \hat{\bb{X}}_k^{(j-1)} - \fl(\hat{\beta}_k^{(j-1)}\hat{\bb{X}}_k^{(j-1)}\hat{\bb{v}}_k^{(j-1)})\hat{\bb{v}}_k^{(j)\top} = \hat{\bb{X}}_k^{(j-1)}(\bb{P}_k^{(j)}+\Delta \bb{P}_k^{(j)}), \label{eqn:Xupdate}
\end{equation}
where $\|\Delta \bb{P}_k^{(j)}\|_F \leq \tilde{\gamma}_{m-(k-1)r}$.
Since $\hat{\bb{X}}_k^{(1)} = \bb{I} - \hat{\beta}_k^{(1)}\hat{\bb{v}}_k^{(1)}\hat{\bb{v}}_k^{(1)\top} = \bb{P}_k^{(1)} + \Delta \bb{P}_k^{(1)}$, we can travel up the recursion relation in \cref{eqn:Xupdate} and use \cref{lem:3.7} to form
\begin{equation}
	\|\Delta \bb{X}_k^{(j)} \|_F := \|\hat{\bb{X}}_k^{(j)}-\bb{X}_k^{(j)}\|_F \leq j\tilde{\gamma}_{m-(k-1)r}. \label{eqn:deltX}
\end{equation}

\paragraph{HQR within each block: line 3 of \cref{algo:blockHQR}}
We apply \Cref{algo:hhQR} to the $k^{th}$ block, $\hat{\bb{X}}_{k-1}\cdots\hat{\bb{X}}_1\bb{C}_k$, which applies $r$ more HH transformations to columns that had been transformed by $(k-1)$ WY transformations in prior iterations.
The upper trapezoidal factor that results from applying HQR to $\bb{C}_{k}^{((k-1)r)}$ corresponds to columns $(k-1)r+1$ through $kr$ of $\hat{\bb{R}}_{BQR}$, and applying \cref{lem:3.7,lem:19.3} yields
\begin{equation*}
	\|\hat{\bb{R}}_{BQR}[:,j]-\bb{R}[:,j]\|_2 \leq r\tilde{\gamma}_{m}\|\hat{\bb{X}}_{k-1}\cdots\hat{\bb{X}}_1^{\top}\bb{C}_k[:,j]\|_2,\;\; j=(k-1)r+1:kr.
\end{equation*}
\paragraph{Build WY at each block: line 4 of \cref{algo:blockHQR}}
We now calculate the rounding errors incurred from building the WY representation when given a set of HH vectors and constants as shown in \cref{algo:buildWY}.
Since the columns of $\hat{\bb{Y}}_k$ are simply $\{\hat{\bb{v}}_k^{(j)}\}$ built in \cref{algo:hhQR} the errors for forming these are shown in \cref{eqn:vbound} where $m$ should be replaced by $m-(k-1)r$.
The HH constants, $\hat{\beta}_k^{(j)}$ are bounded by \cref{eqn:beta3} modified similarly. 
Thus, $\bb{z}_k^{(j)}$ is the only newly computed quantity. 
Using \cref{eqn:deltX,eqn:vbound,eqn:beta3}, we find
\begin{align*}
\|\Delta \bb{z}_k^{(j)}\|_2 &= \|\Delta\bb{X}_k^{(j-1)}\hat{\beta}_k^{(j)}\hat{\bb{v}}_k^{(j)} \|_2 \leq \|\Delta\bb{X}_k^{(j-1)}\|_2 \|\hat{\beta}_k^{(j)}\hat{\bb{v}}_k^{(j)}\|_2  \leq \|\Delta\bb{X}_k^{(j)-1}\|_F\|\hat{\beta}_k^{(j)}\hat{\bb{v}}_k^{(j)}\|_2 \\
& \leq \left((1+(j-1)\tilde{\gamma}_{m-(k-1)r})(1 + \tilde{\gamma}_{m-(k-1)r})-1\right) \| \beta_k^{(j)}\bb{v}_k^{(j)}\|_2 \leq j\tilde{\gamma}_{m-(k-1)r}\|\bb{z}_k^{(j)}\|_2.
\end{align*}
Componentwise bounds follow immediately, and are summarized in \cref{lem:BQR-build}.
\begin{lemma}\label{lem:BQR-build}
	Consider the construction of the WY representation for the $k^{th}$ partition of matrix $\bb{A}\in\R^{m\times n}$ given a set of HH constants and vectors, $\{\beta_k^{(j)}\}_{j=1}^r$ and $\{\bb{v}_{k}^{(j)}\}$ via \cref{algo:buildWY}.
	Then, 
	\begin{equation}
		\hat{\bb{z}}_{k}^{(j)} = \bb{z}_{k}^{(j)} + \Delta \bb{z}_{k}^{(j)},\;\; |\Delta \bb{z}_{k}^{(j)}| \leq j\tilde{\gamma}_{m-(k-1)r} |\bb{z}_{k}^{(j)}|,\;\; \|\Delta \bb{z}_k^{(j)}\|_2 \leq j\tilde{\gamma}_{m-(k-1)r}\|\bb{z}_k^{(j)}\|_2.\label{eqn:BQR-z}
	\end{equation}
\end{lemma}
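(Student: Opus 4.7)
The plan is to treat $\hat{\bb{z}}_k^{(j)}$ as the floating-point realization of $\beta_k^{(j)}\bb{X}_k^{(j-1)}\bb{v}_k^{(j)}$, since the computation in line 3 of \cref{algo:buildWY} is exactly the efficient implementation of the action of $\hat{\bb{X}}_k^{(j-1)}=\bb{I}-\hat{\bb{W}}_k^{(j-1)}\hat{\bb{V}}_k^{(j-1)\top}$ on $\hat{\bb{v}}_k^{(j)}$ followed by scaling by $\hat{\beta}_k^{(j)}$. The rounding error therefore has three distinguishable sources that I would enumerate: (i) the error already baked into $\hat{\bb{X}}_k^{(j-1)}$ from prior WY updates, bounded by $\|\Delta\bb{X}_k^{(j-1)}\|_F\leq (j-1)\tilde{\gamma}_{m-(k-1)r}$ via \cref{eqn:deltX}; (ii) the errors in the stored quantities $\hat{\beta}_k^{(j)}$ and $\hat{\bb{v}}_k^{(j)}$, each bounded by $\tilde{\gamma}_{m-(k-1)r}$ via \cref{eqn:beta3,eqn:vbound}; and (iii) the fresh FLOP errors from the inner product $\hat{\bb{V}}_k^{(j-1)\top}\hat{\bb{v}}_k^{(j)}$, the matrix-vector product with $\hat{\bb{W}}_k^{(j-1)}$, the subtraction, and the final scalar multiplication by $\hat{\beta}_k^{(j)}$.

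For the normwise bound, I would follow the display sketched just above the lemma statement. Writing $\hat{\bb{z}}_k^{(j)} - \bb{z}_k^{(j)} = \Delta\bb{X}_k^{(j-1)}\beta_k^{(j)}\bb{v}_k^{(j)} + \bb{X}_k^{(j-1)}(\hat{\beta}_k^{(j)}\hat{\bb{v}}_k^{(j)}-\beta_k^{(j)}\bb{v}_k^{(j)}) + \bb{E}$, where $\bb{E}$ collects the fresh FLOP errors, and then bounding each piece in 2-norm using submultiplicativity and $\|\bb{X}_k^{(j-1)}\|_2=1$, all contributions take the form of a product of $(1+\tilde{\gamma}_{m-(k-1)r})$-type factors times $\|\beta_k^{(j)}\bb{v}_k^{(j)}\|_2 = \|\bb{z}_k^{(j)}\|_2$. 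Applying the identity $(1+(j-1)\tilde{\gamma}_{m-(k-1)r})(1+\tilde{\gamma}_{m-(k-1)r})-1 \leq j\tilde{\gamma}_{m-(k-1)r}$ from \cref{lem:up} then collapses everything to the claimed bound $j\tilde{\gamma}_{m-(k-1)r}\|\bb{z}_k^{(j)}\|_2$.

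For the componentwise bound I would run the same decomposition but applied absolutely entry-by-entry: the inner products in line 3 yield componentwise backward errors bounded by $\tilde{\gamma}_{m-(k-1)r}$ on each entry of $\hat{\bb{v}}_k^{(j)}$ and $\hat{\bb{w}}_k^{(j-1)}$ row contributions, and the same absorption identity from \cref{lem:up} collapses $j-1$ accumulated $\tilde{\gamma}$'s with the fresh one into $j\tilde{\gamma}$. The main obstacle I anticipate is justifying that the componentwise bound is tight in terms of $|\bb{z}_k^{(j)}|$ rather than the more naive $|\beta_k^{(j)}||\bb{X}_k^{(j-1)}||\bb{v}_k^{(j)}|$, since sign cancellations inside $\bb{X}_k^{(j-1)}\bb{v}_k^{(j)}$ could in principle make $|\bb{z}_k^{(j)}|$ much smaller than the product of absolute values; the cleanest way around this is to verify that the error in each component factors through the perturbation of the corresponding component of $\bb{z}_k^{(j)}$ itself, which is how the authors treat the claim as immediate from the norm calculation.
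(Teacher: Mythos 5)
Your proposal follows essentially the same route as the paper: it treats $\hat{\bb{z}}_k^{(j)}$ as the computed application of $\hat{\bb{X}}_k^{(j-1)}$ to $\hat{\beta}_k^{(j)}\hat{\bb{v}}_k^{(j)}$, bounds the accumulated transformation error via \cref{eqn:deltX} and the errors in the stored HH quantities via \cref{eqn:vbound,eqn:beta3}, uses $\|\bb{X}_k^{(j-1)}\|_2=1$ so that $\|\beta_k^{(j)}\bb{v}_k^{(j)}\|_2=\|\bb{z}_k^{(j)}\|_2$, and collapses the product $(1+(j-1)\tilde{\gamma}_{m-(k-1)r})(1+\tilde{\gamma}_{m-(k-1)r})-1$ to $j\tilde{\gamma}_{m-(k-1)r}$ exactly as in the display preceding the lemma. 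The cancellation subtlety you flag for the componentwise bound is real, but the paper offers no more than ``componentwise bounds follow immediately,'' so your treatment is no weaker than the original.
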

Most importantly, this shows that constructing the WY update is just as numerically stable as applying successive HH transformations (see Section 19.5 of \cite{Higham2002}).

\paragraph{Update blocks to the right: line 5 of \cref{algo:blockHQR}}
We now consider applying $\bb{X}_{k}:=\bb{I}-\bb{W}_k\bb{Y}_k^{\top}$ to some matrix, $\bb{B}$.
In practice, $\bb{B}$ is the bottom right submatrix, $[\bb{C}_{k+1}\cdots \bb{C}_{N}][(k-1)r+1:m,:]$.
We can apply \cref{eqn:deltX} directly to the columns of $\bb{B}$, 
\begin{align}
	\|\fl(\hat{\bb{X}}_k \bb{B}[:,j])\|_2 = \|\fl(\hat{\bb{X}}_k^{(r)} \bb{B}[:,j])\|_2 \leq r\tilde{\gamma}_{m-(k-1)r} \|\bb{B}[:,j]\|_2.
\end{align}
A normwise bound for employing a general matrix-matrix multiplication operation is stated in section 19.5 of \cite{Higham2002}.
\paragraph{Multiple WY updates: line 8-9 of \cref{algo:blockHQR}}
All that remains is to consider the application of successive WY updates to form the QR factorization computed with BQR denoted as $\bb{Q}_{BQR}$ and $\bb{R}_{BQR}$. 
We can apply \cref{lem:3.7} directly by setting $\bb{X}_{k}:= \bb{I}-\bb{W}_{k}\bb{Y}_{k}^{\top}$ and consider the backward errors for applying the sequence to a vector, $\bb{x}\in\R^{m}$, as we did for \cref{lem:19.3}. 
Since $\bb{X}_{k}=\bb{P}_{(k-1)r+1}\cdots\bb{P}_{kr}$, is simply a sequence of HH transformations, it is orthogonal, i.e. $\|\bb{X}_{k}\|_2=1$.
We only need to replace with $\bb{x}$ with $\bb{A}[:,i]$'s to form the columnwise bounds for $\bb{R}_{BQR}$, and apply the transpose to $\hat{\bb{e}}_i$'s to form the bounds for $\bb{Q}_{BQR}$.
Then, 
\begin{align}
\left|\left|\prod_{k=1}^N (\bb{X}_{k} + \Delta \bb{X}_{k})- \prod_{k=1}^N\bb{X}_{k} \right|\right|_F &\leq\left(-1+\sum_{k=1}^N (1+r\tilde{\gamma}_{m-(k-1)r})\right) \leq rN\tilde{\gamma}_m \equiv n\tilde{\gamma}_m ,\label{eqn:BQR-mp}\\
\|\hat{\bb{Q}}_{BQR}-\bb{Q}\|_F&\leq n^{3/2}\tilde{\gamma}_m. \label{eqn:BQR}
\end{align}
We can also form the normwise bound for the $j'^{\ th}$ column of $\hat{\bb{Q}}_{BQR},\hat{\bb{R}}_{BQR}$. 
If we let $k' = \lceil j'/r\rceil^{th}$, then the $j'^{\ th}$ column is the result of applying $k'-1$ WY updates and an additional HQR. 
Applying \cref{lem:3.7} yields 
\begin{align}
\|\Delta \bb{R}_{BQR}[:,j']\|_2 \leq rk'\tilde{\gamma}_{m} \|\bb{A}[:,j']\|_2,&\;\; \|\Delta \bb{R}_{BQR}\|_F \leq n\tilde{\gamma}_{m} \|\bb{A}\|_F\\
\|\Delta \bb{Q}_{BQR}[:,j']\|_2 \leq rk'\tilde{\gamma}_{m},&\;\;\|\Delta \bb{Q}_{BQR}\|_F = r\tilde{\gamma}_{m}\sum_{j=1}^n \lceil j/r\rceil = n^{3/2}\tilde{\gamma}_{m}.\label{eqn:BQRmat}
\end{align}
and near orthogonality of the $\bb{Q}$ factor is still achieved.
\paragraph{BQR and HQR error bound comparison}
BQR under exact arithmetic is equivalent to HQR, and it is often referred to as the level-3 BLAS version of HQR. 
Furthermore, the error analysis of this section shows that BQR is as numerically stable as HQR despite requiring more FLOPs.
In fact, many linear algebra libraries such as LAPACK use a variant of BQR as the QR factorization algorithm (see {\tt dgeqrf} of \cite{LAPACK}).
The primary goal of the analysis presented in this section is to provide the basic skeleton for the standard BQR rounding error analysis to make the generalization to mixed precision settings in \cref{sec:mpanalysis} easier.
Readers should refer to \cite{golub2013matrix,Higham2002} for full details.
\subsection{Block HQR with partitioned rows : Tall-and-Skinny QR (TSQR)}\label{sec:TSQR}
Some important problems that require QR factorizations of overdetermined systems include least squares problems, eigenvalue problems, low rank approximations, as well as other matrix decompositions. 
Overdetermined systems with far more rows than columns are called tall-and-skinny.
Although Tall-and-Skinny QR (TSQR) broadly refers to block QR factorization methods with row partitions, we will discuss a specific variant of TSQR which is also known as the AllReduce algorithm \cite{Mori2012}.
In this paper, the TSQR/AllReduce algorithm refers to the most parallel variant of the block QR factorization algorithms discussed in \cite{Demmel2012}.
A detailed description and rounding error analysis of this algorithm can be found in \cite{Mori2012}, and we present a pseudocode for the algorithm in \cref{algo:par_tsqr}.
Our initial interest in this algorithm came from its parallelizable nature, which is particularly suitable to implementation on GPUs. 
Additionally, our numerical simulations (discussed in \cref{sec:NE}) show that TSQR can not only increase the speed but also achieve higher accuracy than the traditional HQR factorization in low precisions.
\subsubsection{TSQR/AllReduce Algorithm}
\Cref{algo:par_tsqr} partitions the rows of a tall-and-skinny matrix, $\bb{A}$. 
HQR is performed on each of those blocks and pairs of $\bb{R}$ factors are combined to form the next set of $\bb{A}$ matrices to be QR factorized. 
This process is repeated until only a single $\bb{R}$ factor remains, and the $\bb{Q}$ factor is built from all of the HH constants and vectors stored at each level.
The most gains from parallelization can be made in the initial level where the maximum number of independent HQR factorizations occur. 
Although more than one configuration of this algorithm may be available for a given tall-and-skinny matrix, the number of nodes available and the shape of the matrix eliminate some of those choices. 
For example, a 1600-by-100 matrix can be partitioned into 2, 4, 8, or 16 initial row-blocks but may be restricted by a machine with only 4 nodes, and a 1600-by-700 matrix can only be partitioned into 2 initial blocks.
Our numerical experiments show that the choice in the initial partition, which directly relates to the recursion depth of TSQR, has an impact in the accuracy of the QR factorization. \par

We refer to \emph{level} as the number of recursions in a particular TSQR implementation. 
An $L$-level TSQR algorithm partitions the original matrix into $2^{L}$ submatrices in the initial or $0^{th}$ level of the algorithm, and $2^{L-i}$ QR factorizations are performed in level $i$ for $i = 1 , \cdots, L$. 
The set of matrices that are QR factorized at each level $i$ are called $\bb{A}_j^{(i)}$ for $j = 1, \cdots, 2^{L-i}$, where superscript $(i)$ corresponds to the level and the subscript $j$ indexes the row-blocks within level $i$.
In the following sections, \cref{algo:par_tsqr} ({\tt tsqr}) will find a TSQR factorization of a matrix $\bb{A}\in\R^{m\times n}$ where $m \gg n$. 
The inline function {\tt qr} refers to \cref{algo:hhQR} and we use \cref{algo:hh_v2} as a subroutine of {\tt qr}.
\begin{algorithm2e}[H]
	\DontPrintSemicolon 
	\KwIn{$\bb{A}\in\R^{m \times n}$ where $m \gg n$, $L\leq\lfloor\log_2\left(\frac{m}{n}\right)\rfloor$, and $2^L$ is the initial number of blocks. }
	
	\KwOut{$\bb{Q}\in\R^{m \times n}$, $\bb{R} \in\R^{n\times n}$ such that 	$\bb{Q}\bb{R} = \bb{A}$.}
	$h \gets 2^{-L}m$ \tcp*{Number of rows.}
	\tcc{Split $\bb{A}$ into $2^L$ blocks. Note that level $(i)$ has $ 2^{L-i}$ blocks.}
	\For {$j = 1 : 2^L$}{
		$\bb{A}_j^{(0)} \gets \bb{A}[(j-1)h+1: jh, :]$ 
	}
	\tcc{Store HH vectors as columns of matrix $\bb{V}_j^{(i)}$, HH constants as components of vector $\bm{\beta}_j^{(i)}$, and set up the next level.}
	\For{$i = 0 : L-1$}{
		\tcc{The inner loop can be parallelized.}
		\For {$j = 1 : 2^{L-i}$ }{
			$\bb{V}_{2j-1}^{(i)}$, $\bm{\beta}_{2j-1}^{(i)}$, $\bb{R}_{2j-1}^{(i)} \gets{\tt qr}(\bb{A}_{2j-1}^{(i)})$ \;
			$\bb{V}_{2j}^{(i)}$, $\bm{\beta}_{2j}^{(i)}$, $\bb{R}_{2j}^{(i)} \gets{\tt qr}(\bb{A}_{2j}^{(i)})$\;
			\(\bb{A}_{j}^{(i+1)} \gets \begin{bmatrix}
			\bb{R}_{2j-1}^{(i)}\\
			\bb{R}_{2j}^{(i)}
			\end{bmatrix}\)
		}
	}
	$\bb{V}_{1}^{(L)}$, $\bm{\beta}_1^{(L)}$, $\bb{R}  \gets{\tt qr}(\bb{A}_{1}^{(L)})$ \tcp*{The final $\bb{R}$ factor is built.}
	$\bb{Q}_{1}^{(L)} \gets {\tt hh\_mult}(\bb{V}_{1}^{(L)}, I_{2n\times n})$\;
	\tcc{Compute $\bb{Q}^{(i)}$ factors by applying $\bb{V}^{(i)}$ to $\bb{Q}^{(i+1)}$ factors.}
	\For {$i = L-1 : -1 : 1$}{
		\For {$j = 1 : 2^{L-i}$}{
			\(\bb{Q}_{j}^{(i)} \gets {\tt hh\_mult}\left(\bb{V}_{j}^{(i)}, \begin{bmatrix}
			\tilde{\bb{Q}}_{\alpha(j), \phi(j)}^{(i+1)}\\
			\bb{0}
			\end{bmatrix}\right)\)
		}
	}
	$\bb{Q} \gets [];$\tcp*{Construct the final $\bb{Q}$ factor.}
	\For{$ j = 1 : 2^L $}{
		\(\bb{Q} \gets \begin{bmatrix}
		\bb{Q} \\
		{\tt hh\_mult}\left(\bb{V}_{j}^{(0)} , \begin{bmatrix}
		\tilde{\bb{Q}}_{\alpha(j), \phi(j)}^{(1)}\\
		\bb{0}
		\end{bmatrix} \right)
		
		\end{bmatrix}\)
		}
	\Return{$\bb{Q}$, $\bb{R}$}
	\caption{$\bb{Q},\bb{R}={\tt tsqr}(\bb{A}, L)$.  Finds a QR factorization of a tall, skinny matrix, $\bb{A}$. }
	\label{algo:par_tsqr}
\end{algorithm2e}
\paragraph{TSQR Notation}
We introduce new notation due to the multi-level nature of the TSQR algorithm.
In the final task of constructing $\bb{Q}$, $\bb{Q}_j^{(i)}$ factors are aggregated from each block at each level.
Each $\bb{Q}_j^{(i)}$ factor from level $i$ is partitioned such that two corresponding $\bb{Q}^{(i-1)}$ factors from level $i-1$ can be applied to them. 
The partition (approximately) splits $\bb{Q}_{j}^{(i)}$ into two halves, $[\tilde{\bb{Q}}_{j, 1}^{(i)\top} \tilde{\bb{Q}}_{j, 2}^{(i)\top}]^{\top}$.
The functions $\alpha(j)$ and $\phi(j)$ are defined such that $\bb{Q}_j^{(i)}$ is applied to the correct blocks from the level below: $\tilde{\bb{Q}}_{\alpha(j), \phi(j)}^{(i+1)}$.
For $j = 1 , \cdots, 2^{L-i}$ at level $i$, we need $j = 2(\alpha(j)-1) + \phi(j)$, where $\alpha(j) = \lceil \frac{j}{2}\rceil$ and $\phi(j) = 2 + j - 2\alpha(j) \in\{1,2\}$.
\Cref{Qdetails} shows full linear algebra details for a single-level ($L=1$, $2$ initial blocks) example.
The reconstruction of $\bb{Q}$ can be implemented more efficiently (see \cite{BDGJNS2014}), but the reconstruction method in \cref{algo:par_tsqr} is presented for a clear, straightforward explanation.
\subsubsection{Single-level Example}
\label{Qdetails}
In the single-level version of this algorithm, we first bisect $\bb{A}$  into $\bb{A}_1^{(0)}$ and $\bb{A}_2^{(0)}$ and compute the QR factorization of each of those submatrices.
We combine the resulting upper-triangular matrices (see below)  
which is QR factorized, and the process is repeated:
\[
\bb{A} = \begin{bmatrix}
\bb{A}_1^{(0)}\\
\bb{A}_2^{(0)}
\end{bmatrix} = \begin{bmatrix}
\bb{Q}_1^{(0)}\bb{R}_1^{(0)}\\
\bb{Q}_2^{(0)}\bb{R}_2^{(0)}
\end{bmatrix} = \begin{bmatrix}
\bb{Q}_1^{(0)} & \bb{0}\\
\bb{0} & \bb{Q}_2^{(0)}
\end{bmatrix} \begin{bmatrix}
\bb{R}_1^{(0)} \\
\bb{R}_2^{(0)}
\end{bmatrix} =\begin{bmatrix}
\bb{Q}_1^{(0)} & \bb{0}\\
\bb{0} & \bb{Q}_2^{(0)}
\end{bmatrix} \bb{A}_1^{(1)} =\begin{bmatrix}
\bb{Q}_1^{(0)} & \bb{0}\\
\bb{0} & \bb{Q}_2^{(0)}
\end{bmatrix} \bb{Q}_1^{(1)}\bb{R}.
\] 
The $\bb{R}$ factor of $\bb{A}_1^{(1)}$ is the final $\bb{R}$ factor of the QR factorization of the original matrix, $\bb{A}$. 
However, the final $\bb{Q}$ still needs to be constructed.
Bisecting  $\bb{Q}_1^{(1)}$ into two submatrices, i.e. $\tilde{\bb{Q}}_{1,1}^{(1)}$ and $\tilde{\bb{Q}}_{1,2}^{(1)}$, allows us to write and compute the product more compactly,  \[
\bb{Q}:=\begin{bmatrix}
\bb{Q}_1^{(0)} & \bb{0}\\
\bb{0} & \bb{Q}_2^{(0)}
\end{bmatrix} \bb{Q}_1^{(1)} =    \begin{bmatrix}
\bb{Q}_1^{(0)} & \bb{0}\\
\bb{0} & \bb{Q}_2^{(0)}
\end{bmatrix} \begin{bmatrix}
\tilde{\bb{Q}}_{1,1}^{(1)}\\
\tilde{\bb{Q}}_{1,2}^{(1)}
\end{bmatrix}= \begin{bmatrix}
\bb{Q}_1^{(0)}\tilde{\bb{Q}}_{1,1}^{(1)} \\ 
\bb{Q}_2^{(0)}\tilde{\bb{Q}}_{1,2}^{(1)}
\end{bmatrix}. \]
More generally, \cref{algo:par_tsqr} takes a tall-and-skinny matrix $\bb{A}$ and level $L$ and finds a QR factorization by initially partitioning $\bb{A}$ into $2^{L}$ row-blocks and includes the building of $\bb{Q}$.
For simplicity, we assume that $m$ is exactly $h2^{L}$ so that the initial partition yields $2^{L}$ blocks of equal sizes, $h$-by-$n$. 
Also, note that {\tt hh\_mult} refers to the action of applying multiple HH transformations given a set of HH vectors and constants, which can be performed by iterating line 6 of \cref{algo:hhQR}.
This step can be done in a level-3 BLAS operation via a WY update if \cref{algo:par_tsqr} was modified to store the WY representation at the QR factorization of each block of each level, $\bb{A}_j^{(i)}$. 

\subsubsection{TSQR: Rounding Error Analysis}
\label{sec:TSQRre}
The TSQR algorithm presented in \cref{algo:par_tsqr} is a divide-and-conquer strategy for the QR factorization that uses the HQR within the subproblems. 
Divide-and-conquer methods can naturally be implemented in parallel and accumulate less rounding errors.
For example, the single-level TSQR decomposition of a tall-and-skinny matrix $\bb{A}$ requires 3 total HQRs of matrices of sizes $\lfloor\log_{2}(\frac{m}{n})\rfloor$-by-$n$, $\lceil\log_{2}(\frac{m}{n})\rceil$-by-$n$, and $2n$-by-$n$.
The single-level TSQR strictly uses more FLOPs, but the dot product subroutines may accumulate smaller rounding errors (and certainly have smaller upper bounds) since they are performed on shorter vectors, and lead to a more accurate solution overall.
These concepts are elucidated in \cite{Mori2012} and we summarize the main results  in \cref{thm:moriTSQR}.

\begin{theorem}
	\label{thm:moriTSQR}
	Let $\bb{A}\in\R^{m\times n}$ with $m\geq n$ have full rank, $n$, and $\hat{\bb{Q}}_{TSQR}\in\R^{m\times n}$ and $\hat{\bb{R}}_{TSQR}\in\R^{n\times n}$ be the thin QR factors of $\bb{A}$ obtained via \cref{algo:par_tsqr} with $L$ levels. 
	Let us further assume that $m$ is divisible by $2^L$ and $n\tilde{\gamma}_{2^{-L}m}, n\tilde{\gamma}_{ 2n} \ll 1$.
	Then, 2-norm error bound for the $j^{th}$ column ($j=1:n$) of $\hat{\bb{R}}_{TSQR}$ and the Frobenius norm error bound for $\hat{\bb{Q}}_{TSQR}$ are
	\begin{align}
	\|\hat{\bb{R}}_{TSQR}[:,j]-\bb{R}[:,j]\|_2 &\leq n(\tilde{\gamma}_{2^{-L}m} + L\tilde{\gamma}_{ 2n})\|\bb{A}[:,j]\|_2,  \label{eqn:tsqrRA}\\
	\|\hat{\bb{Q}}_{TSQR}-\bb{Q}\|_F &\leq n^{3/2}(\tilde{\gamma}_{2^{-L}m} + L\tilde{\gamma}_{ 2n}).\label{eqn:tsqrQ}
	\end{align}
\end{theorem}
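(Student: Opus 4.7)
The plan is to exploit the recursive structure of \cref{algo:par_tsqr} and accumulate the columnwise HQR bound of \cref{thm:feHQR} across the $L+1$ levels of factorization, then bound the $\bb{Q}$ reconstruction by $L+1$ applications of \cref{lem:3.7}. Since the TSQR algorithm is essentially a tree of independent HQR calls on matrices whose row dimension is either $2^{-L}m$ (at the leaves) or $2n$ (at every interior node), the bounds of \cref{thm:feHQR} apply verbatim inside each node, and the only remaining work is to track how errors compose from leaves to root.

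For the $\bb{R}$ factor, I would first apply \cref{thm:feHQR} to each of the $2^L$ leaf HQR calls, obtaining $\|\hat{\bb{R}}_j^{(0)}[:,k]-\bb{R}_j^{(0)}[:,k]\|_2 \leq n\tilde{\gamma}_{2^{-L}m}\|\bb{A}_j^{(0)}[:,k]\|_2$ for each column $k$. Next, I would set up an induction on the level index $i$. At level $i$, the matrix that is QR factorized is the vertical concatenation of two perturbed $\hat{\bb{R}}$ factors from level $i-1$, and applying \cref{thm:feHQR} to this $2n$-by-$n$ input yields a fresh columnwise perturbation bounded by $n\tilde{\gamma}_{2n}$ times the 2-norm of the (perturbed) input column. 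Using the fact that HH transformations are isometries, each $\|\bb{R}_j^{(i)}[:,k]\|_2$ can be bounded above by $\|\bb{A}[:,k]\|_2$, so the recurrence for the columnwise error $E_i$ reduces to $E_i \le (1+n\tilde{\gamma}_{2n}) E_{i-1} + n\tilde{\gamma}_{2n}\|\bb{A}[:,k]\|_2$. Unrolling this under the assumption $n\tilde{\gamma}_{2n},n\tilde{\gamma}_{2^{-L}m}\ll 1$ and discarding second-order terms in the $\tilde{\gamma}$'s (absorbing them into the $c$ implicit in the tilde notation) gives exactly \cref{eqn:tsqrRA}.

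For the $\bb{Q}$ factor I would use \cref{lem:19.3} together with \cref{lem:3.7}. The computed $\hat{\bb{Q}}_{TSQR}$ is obtained by starting at the root with $\hat{\bb{Q}}_1^{(L)}=\fl(\hat{\bb{P}}_1^{(L)}\cdots\hat{\bb{P}}_n^{(L)}\bb{I}_{2n\times n})$ and then descending the tree, multiplying by blocks of HH transformations stored at each level. By \cref{lem:19.3}, each level-$i$ multiplication (for $i\geq 1$) introduces a factor of the form $(\bb{Q}^{(i)}+\Delta\bb{Q}^{(i)})$ with $\|\Delta\bb{Q}^{(i)}\|_F \le n\tilde{\gamma}_{2n}$, while the leaf-level multiplication introduces $\|\Delta\bb{Q}^{(0)}\|_F \le n\tilde{\gamma}_{2^{-L}m}$. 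Chaining these $L+1$ perturbed orthogonal factors via \cref{lem:3.7} and using $\|\bb{Q}^{(i)}\|_2=1$ yields the columnwise bound $n(\tilde{\gamma}_{2^{-L}m}+L\tilde{\gamma}_{2n})$, whose Frobenius aggregation over $n$ columns gives the $n^{3/2}$ factor of \cref{eqn:tsqrQ}.

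The main obstacle, and the place where bookkeeping must be handled carefully, is justifying the transition between levels for the $\bb{R}$ bound: the level-$i$ input $\hat{\bb{A}}_j^{(i)}$ is \emph{already} perturbed by prior levels, so applying \cref{thm:feHQR} at level $i$ strictly bounds the perturbation relative to $\|\hat{\bb{A}}_j^{(i)}[:,k]\|_2$, not the ideal $\|\bb{A}[:,k]\|_2$. Controlling this discrepancy requires the small-error hypothesis $n\tilde{\gamma}_{2^{-L}m},n\tilde{\gamma}_{2n}\ll 1$ so that $(1+n\tilde{\gamma}_{2n})^L$ collapses to $1+Ln\tilde{\gamma}_{2n}$ plus absorbable second-order terms, and a similar argument handles the interaction between the $\bb{Q}^{(i)}$ reconstructions. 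Once this collapse is justified via the $\gamma$-calculus of \cref{lem:up}, the rest of the proof is purely mechanical.
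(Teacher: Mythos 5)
Your proposal is correct and follows essentially the same route as the paper, which does not reprove this result but imports it from the cited reference and explicitly identifies the $n\tilde{\gamma}_{2^{-L}m}$ term with the leaf-level HQR calls and the $L\tilde{\gamma}_{2n}$ term with the $L$ interior levels of $2n$-by-$n$ factorizations --- exactly the per-level accumulation you carry out via \cref{thm:feHQR}, \cref{lem:19.3}, and \cref{lem:3.7}. Your handling of the level-to-level transition (bounding the perturbed column norms by $\|\bb{A}[:,j]\|_2$ through the isometry of the exact transformations and collapsing $(1+n\tilde{\gamma}_{2n})^L$ to first order) is the standard bookkeeping step and is sound under the stated hypothesis $n\tilde{\gamma}_{2^{-L}m}, n\tilde{\gamma}_{2n}\ll 1$.
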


Note that the $n\tilde{\gamma}_{2^{-L}m}$ and $n\tilde{\gamma}_{ 2n}$ terms correspond to errors from applying HQR to the blocks in the initial partition and to the blocks in levels 1 through $L$ respectively.
We can easily replace these with analogous mixed precision terms and keep the analysis accurate.
Both level-2 and level-3 BLAS implementations will be considered in \cref{sec:mpanalysis}.
\paragraph{TSQR and HQR error bound comparison}
We compare the error bounds for HQR and TSQR. 
Consider the bounds for $\|\hat{\bb{Q}}-\bb{Q}\|_F$ in \cref{thm:feHQR,thm:moriTSQR}.
TSQR has a lower worst-case error bound than HQR when integers $m, n > 0$, and $L\geq0$ satisfy
\begin{equation*}
1\gg n^{3/2}\gamma_m \gg n^{3/2}(\gamma_{2^{-L}m}+L\gamma_{2n}).
\end{equation*}
Let us consider as an example the case when $\frac{m}{2^L}=2n$.
Then, the HQR bound is $2^L/(L+1)$ larger than the bound for TSQR with $L$ levels.
For example, in single precision, a HQR of a $2^{15}$-by-$2^6$ matrix results in an upper bound relative backward error ($\|\bb{A}-\hat{\bb{Q}}\hat{\bb{R}}\|_F/\|\bb{A}\|_F$) of $\approx${\tt1.002}, but a TSQR with $L=8$ is bounded by $\approx${\tt 3.516e-02}. 
This case exemplifies a situation in which accuracy is not guaranteed in HQR, but a relative error of $\approx 3.5\%$ is guaranteed when using TSQR. 
Note that these worst-case bounds are likely overestimates in practice
Now consider some $2^{20}$-by-$2^{12}$ matrix and QR factorizations performed with double precision.
The error bound for HQR is {\tt 1.686e-7}, whereas the error bound for TSQR with 12 levels is {\tt 5.351e-10}.
In general, we can conjecture that values of $L$ that can make $2^{-L}m$ and $2Ln$ much smaller than $m$, should produce a TSQR that outperforms HQR in worst-case scenarios, at least in uniform precision settings.
However, the range of matrix sizes that TSQR can accommodate decreases as $L$ grows larger.
Figure~\ref{fig:paramspace} shows the matrix sizes HQR, 2-level TSQR, and 4-level TSQR can accommodate as well as their respective error bounds on the $\bb{Q}$ factor.\par
\begin{wrapfigure}{l}{.45\textwidth}
	\centering
	\includegraphics[width=.45\textwidth]{./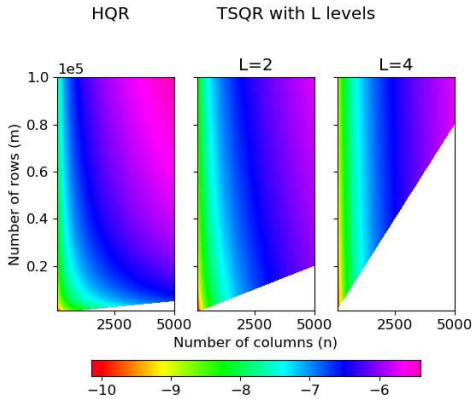}
	\caption{\label{fig:paramspace} Non-white space indicates allowable matrix sizes for each scheme, and color map represents error bounds for $\|\bb{\Delta Q}\|_F$ for uniform precision error analysis when using double precision arithmetic in $\log_{10}$ scale.}
	\vspace{-10pt}	
\end{wrapfigure}
\section{Mixed precision error analysis}\label{sec:mpanalysis}
In this section, we consider three different mixed precision settings for the QR factorization, all of which take in a matrix $\bb{A}$ stored in low precision and return $\bb{Q},\bb{R}$ both represented in low precision. 
First, we consider a trivial mixed precision setting where HQR, BQR, and TSQR are computed in high precision after casting up the input matrix at the beginning, and casting down the resulting high precision factors to low precision. 
Then in \cref{sec:mp-3}, we modify BQR and TSQR to utilize level-3 BLAS operations and TensorCore bFMAs for the matrix product subroutines. 
Finally, we impose \cref{assump:mp} in \cref{sec:mp-2} to see how a mixed precision inner product impacts HQR, BQR, and TSQR when applied in level-2 BLAS operations.

\paragraph{Backward error of casting down vectors} First, consider
casting down a vector  $\bb{x}\in\F_h^{(m)}$.
The componentwise forward error is, \[\text{\tt castdown}_{l}(\bb{x}) = \bb{x} + \Delta {\bb{x}},\;\; |\Delta\bb{x}| < u^{(l)}|\bb{x}|.\]
We use this to represent the backward error of a casting down a vector with a linear transformation, $\bb{I}^{(l)}:=\bb{I} +\bb{E}\in\R^{m\times m}$, a diagonal perturbation of the identity.
We write,
\begin{equation}
\bb{x}^{(l)} := \text{\tt castdown}(\bb{x}^{(h)}) = \bb{I}^{(l)}\bb{x}^{(h)} = (\bb{I}+\bb{E})\bb{x}^{(h)} = \bb{x}^{(h)}+\Delta \bb{x},
\end{equation}
where $|\Delta \bb{x}| \leq u^{(l)} |\bb{x}^{(h)}|$ and  $\|\Delta \bb{x}\|_2 \leq u^{(l)} \|\bb{x}^{(h)}\|_2$.
Thus, $\bb{E} = \Delta \bb{x x}^{\top}/\|\bb{x}\|_2^2$ and we can use the same argument as in \cref{eqn:outer} to form a backward matrix norm bound, 
\begin{equation}
\|\bb{E}\|_F\leq u^{(l)}. \label{eqn:castdown}
\end{equation}

\paragraph{Casting down after HQR in high precision} Let us consider the trivial case of carrying out HQR in high precision and casting down at the very end.
This is useful for the analysis of mixed precision
block algorithms as will be shown in \cref{sec:mp-3}.
If the two floating point types $\F_{l}$ and $\F_{h}$ satisfy $\F_{l}\subseteq \F_{h}$
and the matrix to be factorized is stored with low precision numbers, $\bb{A}\in\F_{l}^{m\times n}$, then casting up adds no rounding errors.
Therefore, we can directly apply the analysis that culminated in \cref{thm:feHQR}, and we only consider the columnwise forward error in the $\bb{Q}$ factor.
Then, the $j^{th}$ column of $\hat{\bb{Q}}_{HQR} = \bb{Q} + \Delta \bb{Q}_{HQR}$ is bounded normwise via $\|\Delta \bb{Q}_{HQR}[:,j]\|_2 \leq n\tilde{\gamma}_{m}^{h},$ and incurs an extra rounding error when $\hat{\bb{Q}}_{HQR}\in\F_{h}^{m\times n}$ is cast down to $\F_{l}^{m\times n}$.
Using this in \cref{lem:3.7} to analyze the forward norm error for the $j^{th}$ column of the $\bb{Q}$ factor computed with \cref{algo:hhQR} yields
\begin{equation}
	\|(\text{\tt castdown}(\hat{\bb{Q}}_{HQR})- \bb{Q})[:,j]\|_2 = \|(\bb{I}^{(l)}\hat{\bb{P}}_{1}\cdots\hat{\bb{P}}_{n}-\bb{P}_{1}\cdots\bb{P}_{n})\hat{\bb{e}}_j\|_2 \leq u^{(l)}+n\tilde{\gamma}_m^{(h)} + nu^{(l)}\tilde{\gamma}_m^{(h)}.\label{eqn:HQR-mp}
\end{equation}
The final castdown operation increases the upper bound by $u^{(l)}$ and the size of $\bb{A}$ has no impact on this extra rounding error.
Applying this trivial mixed precision setting to BQR and TSQR would simply increases the error bound by approximately $u^{(l)}$ all the while taking an even longer time than the high precision implementation due the extra cast down and cast up operations.
Therefore, we do not analyze the rounding error analysis of this mixed precision variant of BQR and TSQR.
However, we will use this mixed precision HQR as a subroutine of the mixed precision BQR and TSQR in the following section.
\subsection{Round down at inner product: level-2 BLAS mixed precision setting}\label{sec:mp-2}
Recall that HQR, BQR, and TSQR all rely on HH transformations in one way or another, and implementations of HH transformations are expressed by \cref{eqn:effH}.
This implementation capitalizes on the rank-1 update structure of HH transformations where the predominant share of FLOPs is spent on an inner product, and computing the HH vector and constant also rely heavily on inner products.
Therefore, nearly all of the computational tasks for \cref{algo:hhQR,algo:blockHQR,algo:par_tsqr} are attributed to the inner product, which is important in other linear algebra tools such as projections, matrix-vector, and matrix-matrix multiply.
Consequently, we return to \cref{assump:mp}, where every inner product is cast down to the lower precision as shown in \cref{eqn:aftercd}. 
We denote HQR, BQR, and TSQR computed with \cref{assump:mp} with {\tt mpHQR2}, {\tt mpBQR2}, and {\tt mpTSQR2}, where the {\tt 2} represents the mixed precision procedure computed at a level-2 BLAS operation.
\subsubsection{HQR round down at inner product: {\tt mpHQR2}}
Consider forming an HH transformation that zeroes out $\bb{x}\in\R^m$ below the the $i^{th}$ element. 
We need to compute $\sigma$, $\beta$, $\tilde{\bb{v}}_1$, and $\bb{v}$ as defined in \cref{sec:HQR},
\begin{align}
\fl(\sigma) &= \fl(-\rm{sign}(\bb{x}[1])\|\bb{x}\|_2) = \sigma + \Delta \sigma,\;\;|\Delta\sigma| \leq \left(\gamma_{2}^{(l)}+\gamma_{m}^{(h)}+\gamma_{2}^{(l)}\gamma_{m}^{(h)}\right)|\sigma|,\label{eqn:mpsigma}\\
\fl(\bb{v}'[1])& =\bb{v}'[1] + \Delta \bb{v}'[1] = (1+\dd^{(l)}) (\bb{x}[1]-\sigma-\Delta\sigma), \;\;|\Delta\bb{v}'[1]| \leq (\gamma_{3}^{(l)}+\tilde{\gamma}_{m}^{(h)})|\bb{v}'[1]| \label{eqn:mpv1}\\
\fl(\beta) &= \beta +\Delta \beta= (1+\dd^{(l)})\left(-\bb{v}'[1]/\hat{\sigma}\right), \;\; |\Delta\beta| \leq (\gamma_{5}^{(l)}+\tilde{\gamma}_{m}^{(h)})|\beta|, \label{eqn:mpbeta}\\
\fl(\bb{v}[j])	&= \bb{v}[j] + \Delta \bb{v}[j]\text{ where }|\Delta \bb{v}[j]|\leq 
(\gamma_{3}^{(l)} + \tilde{\gamma}_{m}^{(h)})|\bb{v}_j|,j=2:m-i+1 \label{eqn:mpv}. 
\end{align}
These bounds on $\Delta\sigma$, $\Delta \bb{v}'[1]$, $\Delta \beta$, and $\Delta \bb{v}[j]$ are computed by using the rules from \cref{lem:mp} on the analysis shown in \cref{sec:HQR}.
Using these, we can formulate the mixed precision version of \cref{eqn:applyP} where $\hat{\bb{y}}=\fl(\bb{P_vx})\in\R^m$ is implemented via \cref{eqn:effH}.
Note that the inner product $\hat{\bb{v}}^{\top}\bb{x}$ via \cref{assump:mp}, and all other operations are done in the lower precision.
Then, the transformed vector is bounded by
\begin{equation}
\hat{\bb{y}} = \bb{y}+\Delta \bb{y},\;\; \|\Delta \bb{y}\|_2 \leq (\gamma_{14}^{(l)} + \tilde{\gamma}_{m}^{(h)})\|\bb{y}\|_2.\label{eqn:mpdelty}
\end{equation}
Thus, a backward error can be formed using $\Delta \bb{P_v} = \Delta \bb{y}\bb{x}^{
	\top}/\|\bb{x}\|_2^2$ and by introducing the $\tilde{\gamma}$ notation for the low precision term,
\begin{equation}
\hat{\bb{y}} = (\bb{P_v} + \Delta \bb{P_v})\bb{x},\;\; \|\Delta \bb{P_v}\|_F\leq (\tilde{\gamma}_{10}^{(l)} + \tilde{\gamma}_{m}^{(h)}).\label{eqn:mpapplyP}
\end{equation}
Now, we form the error bounds for applying $n$ HH transformations to $\bb{x}$ using \cref{lem:3.7},
\begin{align}
\hat{\bb{z}} &= \fl(\bb{P}_1\cdots\bb{P}_n\bb{x})=\bb{Q} (\bb{x} +\Delta \bb{x}) = (\bb{Q} + \Delta \bb{Q})\bb{x},\\
\|\Delta \bb{y}\|_2 &\leq (\tilde{\gamma}_{10n}^{(l)}+n\tilde{\gamma}_m^{(h)})\|\bb{x}\|_2,\;\; \|\Delta \bb{Q}\|_F\leq (\tilde{\gamma}_{10n}^{(l)}+n\tilde{\gamma}_m^{(h)}).\label{eqn:mp19.3}
\end{align} 
The analogous mixed precision QR factorization error bounds are shown in \cref{thm:mpHQR}.
\begin{theorem}
	\label{thm:mpHQR}
	Let $\bb{A}\in\R^{m\times n}$ with $m\geq n$ have full rank, $n$. 
	Let $\hat{\bb{Q}}_{mpHQR2}\in\R^{m\times n}$ and $\hat{\bb{R}}\in\R^{n\times n}_{mpHQR2}$ be the thin QR factors of $\bb{A}$ obtained via \cref{algo:hhQR} with mixed precision FLOPs where inner products are computed in precision $h$ then cast down.
	All other operations are carried out in precision $l$.
	Then,
	\begin{align}
	\|\Delta \bb{R}_{mpHQR2}[:,j]\|_2&\leq (\tilde{\gamma}_{10n}^{(l)}+n\tilde{\gamma}_m^{(h)}) \|\bb{A}[:,j]\|_2,\;\; \|\Delta \bb{R}_{mpHQR2}\|_F\leq (\tilde{\gamma}_{10n}^{(l)}+n\tilde{\gamma}_m^{(h)}) \|\bb{A}\|_F \label{eqn:mpHQR2R}\\
	\|\Delta \bb{Q}[:,j]_{mpHQR2}\|_2&\leq (\tilde{\gamma}_{10n}^{(l)}+n\tilde{\gamma}_m^{(h)}),\;\; \|\Delta \bb{Q}_{mpHQR2}\|_F \leq n^{1/2} (\tilde{\gamma}_{10n}^{(l)}+n\tilde{\gamma}_m^{(h)})\label{eqn:mpHQR2Q}.
	\end{align}
\end{theorem}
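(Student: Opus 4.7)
The plan is to mirror the structure of the proof of Theorem~\ref{thm:feHQR}, feeding in the mixed precision ingredients established in the paragraphs leading up to the statement. The decisive preparatory result is \cref{eqn:mp19.3}, which records the cumulative forward and backward errors of applying $n$ computed HH transformations under \cref{assump:mp}. Given that, the theorem reduces to packaging the cumulative bound into columnwise bounds on the QR factors and converting to Frobenius norms by summing across columns.

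First I would handle $\hat{\bb{R}}_{mpHQR2}$. Since $\hat{\bb{R}}_{mpHQR2}[:,j] = \fl(\hat{\bb{P}}_n\cdots\hat{\bb{P}}_1\bb{A}[:,j])$, applying \cref{eqn:mp19.3} with $\bb{x}=\bb{A}[:,j]$ yields the columnwise bound \cref{eqn:mpHQR2R} directly. Squaring in $j$ and summing over $j=1,\ldots,n$ converts the columnwise estimate into the Frobenius bound $\|\Delta\bb{R}_{mpHQR2}\|_F\leq(\tilde{\gamma}_{10n}^{(l)}+n\tilde{\gamma}_m^{(h)})\|\bb{A}\|_F$, in the same fashion as the short computation preceding Theorem~\ref{thm:feHQR}.

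Second, for $\hat{\bb{Q}}_{mpHQR2}$, I would write $\hat{\bb{Q}}_{mpHQR2}[:,j] = \fl(\hat{\bb{P}}_1\cdots\hat{\bb{P}}_n\hat{\bb{e}}_j)$ and again invoke \cref{eqn:mp19.3}, now with $\bb{x}=\hat{\bb{e}}_j$. Because $\|\hat{\bb{e}}_j\|_2 = 1$, the columnwise bound is simply $\tilde{\gamma}_{10n}^{(l)}+n\tilde{\gamma}_m^{(h)}$; summing $n$ equal contributions under the Frobenius norm pulls out the factor $n^{1/2}$ and gives \cref{eqn:mpHQR2Q}.

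The substantive obstacle does not lie in the packaging above but in the step that produced \cref{eqn:mp19.3} from the per-transformation bound \cref{eqn:mpapplyP}: one must accumulate $n$ HH factors each of Frobenius error of order $\tilde{\gamma}_{10}^{(l)}+\tilde{\gamma}_m^{(h)}$ via Lemma~\ref{lem:3.7} without letting cross-terms between the two precisions inflate the leading order. This is dispatched by Lemma~\ref{lem:mp}, which permits the two $\tilde{\gamma}$ terms of different precisions to add without contaminating each other's order, together with the orthogonality $\|\bb{P}_j\|_2=1$ used to cancel the product $\prod_j\|\bb{P}_j\|_2$ in the Lemma~\ref{lem:3.7} bound. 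Since those pieces have already been assembled in the buildup, the theorem then follows as a corollary, and only the brief columnwise-to-Frobenius conversion remains to be recorded.
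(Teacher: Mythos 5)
Your proposal is correct and follows essentially the same route as the paper, which states \cref{thm:mpHQR} as a direct packaging of the accumulated bound \cref{eqn:mp19.3}: apply it columnwise to $\bb{A}[:,j]$ for $\hat{\bb{R}}$ and to $\hat{\bb{e}}_j$ for $\hat{\bb{Q}}$, then sum squares over columns to obtain the Frobenius bounds exactly as in the conversion preceding \cref{thm:feHQR}. You also correctly locate the only substantive work in the already-established passage from \cref{eqn:mpapplyP} to \cref{eqn:mp19.3} via \cref{lem:3.7} and \cref{lem:mp}.
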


Before commenting on the significance of \cref{thm:mpHQR}, we show that the same bounds hold for the BQR variant.
In the next sections we analyze using {\tt mpHQR2} instead of {\tt HQR} within \cref{algo:blockHQR,algo:par_tsqr}.

\subsubsection{BQR round down at inner product: {\tt mpBQR2}}
Now, we analyze \cref{algo:blockHQR} implemented with \cref{assump:mp}. 
At the $k^{th}$ block, we first apply the mixed precision HQR summarized in \cref{thm:mpHQR}.
Next, we construct the WY representation, where we can now use \cref{eqn:mpdelty,eqn:mpapplyP,lem:3.7} to form
\begin{equation}
\|\hat{\bb{X}}_{k}^{(l)}- \bb{X}_k\|_F = \|(\hat{\bb{P}}_k^{(1)}\cdots \hat{\bb{P}}_k^{(r)})-(\bb{P}_k^{(1)}\cdots \bb{P}_k^{(r)}))\|_F \leq \tilde{\gamma}_{10r}^{(l)} + r\tilde{\gamma}_{m}^{(h)}.
\end{equation}
Then, the 2-norm bound for the $j^{th}$ column of the $\bb{R}$ factor and the Frobenius norm bound for the orthogonal factor resulting from {\tt mpBQR2} are
\begin{align}
\|\hat{\bb{R}}_{mpBQR2}[:,j]\|_2 &= \|\hat{\bb{X}}_1\cdots\hat{\bb{X}}_N\bb{A}[:,j]\|_2\leq\left( N\tilde{\gamma}_{10r}^{(l)} + n\tilde{\gamma}_{m}^{(h)}\right)\|\bb{A}[:,j]\|_2,\\
\|\hat{\bb{Q}}_{mpBQR2}\|_F &\leq n^{1/2}\left(N\tilde{\gamma}_{10r}^{(l)} + n\tilde{\gamma}_{m}^{(h)}\right) \approx \left(1+\frac{10M_{l,h}}{m}\right)n^{3/2}\tilde{\gamma}_{m}^{(h)}. \label{eqn:mpBQR2}
\end{align}
Note that this error bound is of the same order as the error bound for {\tt mpHQR2}, shown in \cref{eqn:mpHQR2Q}.
\subsubsection{TSQR round down at inner product: {\tt mpTSQR2}}
Finally, we consider using \cref{assump:mp} in \cref{algo:par_tsqr}.
This corresponds to replacing every instance of $n\tilde{\gamma}_{m'}$ for $m'\in\{2n, 2^{-L}m\}$ in \cref{thm:moriTSQR} with $\tilde{\gamma}_{10n}^{(l)} + n\tilde{\gamma}_{m'}^{(h)}$.
We first consider the norm errors for the $j^{th}$ column of the $\bb{Q}$ factor computed by this mixed precision variant of \cref{algo:par_tsqr},
\begin{equation}
\|\hat{\bb{Q}}_{mpTSQR2}[:,j] -\bb{Q}[:,j]\|_2 \leq (L+1)\tilde{\gamma}_{10n}^{(l)} +n(\tilde{\gamma}_{2^{-L}m}^{(h)} + L\tilde{\gamma}_{ 2n}^{(h)}).\label{eqn:mptsqr2Qcol}
\end{equation} 
Then, the matrix norm error bound is 
\begin{align}
\|\hat{\bb{Q}}_{mpTSQR2}-\bb{Q}\|_F \leq n^{1/2}(L+1)\tilde{\gamma}_{10n}^{(l)} +n^{3/2}(\tilde{\gamma}_{2^{-L}m}^{(h)} + L\tilde{\gamma}_{ 2n}^{(h)})\\
\approx \left(1+ \frac{10M_{l,h}L}{2^{-L}m+ 2Ln}\right)n^{3/2}(\tilde{\gamma}_{2^{-L}m}^{(h)} + L\tilde{\gamma}_{ 2n}^{(h)}),\label{eqn:mptsqr2Q}
\end{align}
and contributes larger low precision rounding errors than in \cref{eqn:mpTSQR3}.
If the {\tt mpTSQR2} error bound were to outperform that of {\tt mpHQR2}, we now need integers $m, n > 0$, and $L\geq 0$ that satisfy
\begin{equation*}
1\gg n^{1/2}\left(\tilde{\gamma}_{10n}^{(l)} + n\tilde{\gamma}_{m}^{(h)}\right) \gg n^{1/2}\left((L+1)\tilde{\gamma}_{10n}^{(l)} +n(\tilde{\gamma}_{2^{-L}m}^{(h)} + L\tilde{\gamma}_{ 2n}^{(h)})\right).
\end{equation*}
In contrast to the analysis for uniform precision settings, large $L$ values do not necessarily reduce the error bounds of TSQR. 
While large $L$ can imply $m\gg 2^{-L}m+2Ln$, it is not always the case.
Although the theoretical error bounds do not give a clear indication of the worst-case performances of HQR and TSQR in mixed precision settings, TSQR outperformed HQR on ill-conditioned matrices within our numerical simulations.
These experiments are discussed in detail in \cref{sec:NE}.
\subsection{Round down at block-level: level-3 BLAS mixed precision setting}\label{sec:mp-3}
The mixed precision setting in this section is designed to meet the below requirements.
\begin{enumerate}
	\item Modify \Cref{algo:blockHQR,algo:par_tsqr} to maximize level-3 BLAS operations and use TensorCore bFMAs. 
	\item Apply \cref{eqn:HQR-mp} to all instances of HQR to the error analyses for BQR and TSQR in \cref{sec:algo}.
	\item Cast down quantities at every block/level and the insertion of low precision errors $u^{(l)}$ should be somewhat correlated to the number of blocks and levels. 
	\item Both input and output of the various QR factorization algorithms are given in the low precision. 
\end{enumerate}
TensorCore's bFMA can compute 
\begin{equation}
\hat{\bb{D}} =\fl_{TC}(\bb{C} + \bb{A}\bb{B}),\qquad \bb{C},\bb{D}\in\F_{\text{fp16}}^{4\times 4}\text{ or }\F_{\text{fp32}}^{4\times 4},\text{ and } \bb{A},\bb{B}\in\F_{\text{fp16}}^{4\times 4},\label{eqn:bFMA}
\end{equation}
and employ \emph{full} precision products and fp32 summation accumulate.
Note that the latest A100 GPUs are not restricted to fp16/fp32 in the TensorCore instructions \cite{nvdiaa100}.
Here, the \emph{full} precision multiplication is exact as explained in \cref{sec:background}.
In \cite{Blanchard2020}, the authors investigate all four possible matrix-matrix multiplication routines in TensorCores, which depend on whether $\bb{C}$ and $\bb{D}$ are computed in fp16 or fp32. 
They also note that matrices larger than $4$-by-$4$ can still be computed using this block FMA by accumulating matrix sums with $\bb{C}\in\F_{\text{fp32}}^{4\times 4}$.
Suppose that we aim to compute a fp16 matrix product of two fp16 matrices, $\bb{X}\in\F_{(fp16)}^{m\times p}$, $\bb{Y}\in\F_{(fp16)}^{p\times n}$, and $\bb{Z}=\bb{XY}\in\F_{\text{fp16}}^{m\times n}$.
We pad $\bb{X},\bb{Y}$ with zeros so that all matrix dimensions are multiples of $4$ and the matrix product can be computed with the TensorCore block FMA.
Let $\bb{Q}_{[i,j]}:= \bb{Q}[4(i-1)+1:4i,4(j-1)+1:4j]$ refer to the $(i,j)^{th}$ $4$-by-$4$ block for any $\bb{Q}\in\{\bb{X},\bb{Y},\bb{Z}\}$.
Then, we compute $\bb{Z}_{[i,j]}$ via \[
\bb{Z}_{[i,j]} = \sum_{k=1}^{\lceil p/4\rceil} \bb{X}_{[i,k]} \bb{Y}_{[k,j]},
\]
where we use \cref{eqn:bFMA} by initializing with $\bb{A}^{(1)}:= \bb{X}_{[i,1]}$, $\bb{B}^{(1)}:= \bb{Y}_{[1,j]}$, and $\bb{C}^{(1)}:= \bb{0}_{4\times 4}$ and setting $\bb{A}^{(k)}:= \bb{X}_{[i,k]}$, $\bb{B}^{(k)}:= \bb{Y}_{[k,j]}$, and $\bb{C}^{(k)}:= \bb{D}^{(k-1)}$ for $k=2:\lceil p/4\rceil$.
By setting $\bb{C}^{(k)}, \bb{D}^{(k)}\in\F_{\text{fp32}}^{4\times 4}$ for $k>1$ and only casting down at the end via $\bb{Z}_{[i,j]} =$ fp16$(\bb{D}^{(\lceil p/4\rceil)})$, we maximize our use of fp32 arithmetic.
This computes the most accurate mixed precision matrix product routine possible using TensorCore bFMAs whose inputs and output are required to be stored in fp16.
For example, take $p=8$.
Then the $[i,j]^{th}$ $4$-by-$4$ block of the product is computed via,
\begin{align*}
\bb{D}^{(1)} &= \fl_{TC}(\bb{X}_{[i,1]} \bb{Y}_{[1,j]}),\quad\bb{D}^{(2)} = \fl_{TC}(\bb{X}_{[i,2]} \bb{Y}_{[2,j]} + \bb{D}^{(1)})\in\F_{\text{fp32}}^{4\times 4}\\
\bb{Z}_{[i,j]} &= \text{\tt castdown}(\bb{D}^{(2)})\in\F_{\text{fp16}}^{4\times 4}.
\end{align*}
Adapting the rounding error analysis in \cite{Blanchard2020} into this specific mixed precision matrix product setting yields the componentwise forward bound 
\begin{equation}
|\bb{Z}-\fl(\bb{Z})| \leq \left(u^{(\text{fp16})}+ \gamma_{p}^{(\text{fp32})}+u^{(\text{fp16})} \gamma_{p}^{(\text{fp32})}\right)|\bb{X}||\bb{Y}|.\label{eqn:bFMAerr}
\end{equation}

We denote BQR and TSQR computed via TensorCore bFMA's with {\tt mpBQR3} and {\tt mpTSQR3}, where the {\tt 3} represents the BLAS level-3 nature of this mixed precision setting.
\subsubsection{BQR round down at block level: {\tt mpBQR3}}\label{sec:mp-3b}
Consider the input matrix, $\bb{A}\in\F_l^{m\times n}$, partitioned into $N$ blocks of $r$ columns, $\bb{A}=[\bb{C}_1 \cdots \bb{C}_N]$ as in \cref{sec:BQR}.
\Cref{algo:mpBQR} shows a mixed precision variant of BQR that maximizes the use of bFMAs but uses high precision arithmetic for level-1 and 2 BLAS operations which are only a $\cO(1/N)$ fraction of the total number of FLOPs. 
Each block is cast up to compute a high precision HQR and to form the WY representation. 
The WY representation is then cast down to low precision since the bFMAs require low precision inputs for matrix products, and the $\bb{R}$ factor from the high precision HQR can be cast down to return a low precision $\bb{R}$ factor at the very end. 
Since the cast down operations for the $\bb{R}$ factor and the WY representations occur at every block, we can expect columnwise error bound for \cref{algo:mpBQR} to increase by approximately $Nu^{(l)}$ from the error bound for \cref{algo:blockHQR}.
\begin{algorithm2e}
	\DontPrintSemicolon 
	\KwIn{$\bb{A}$, $r$. \hfill\textbf{Output: }$\hat{\bb{Q}}_{mpBQR3}$,$\hat{\bb{R}}_{mpBQR3}$}
	$N=\frac{n}{r}$\tcc*{Let $\bb{A} = [\bb{C}_{1} \cdots  \bb{C}_{N}]$.}
	\For{$k=1:N-1$}{
			$\bb{V}_{k},\bm{\beta}_k,\bb{C}_{k}\gets$ {\tt hhQR}({\tt castup}($\bb{C}_{k}$))\tcc*{\Cref{algo:hhQR} in high precision.}
		$\bb{C}_{k}\gets ${\tt castdown }($\bb{C}_{k}$)\tcc*{Builds $\bb{R}$ factor in low precision.}
		$\bb{W}_{k}\gets $ {\tt buildWY}$(\bb{V}_{k},\bm{\beta}_k)$ \tcc*{\Cref{algo:buildWY} in high precision}
		$[\bb{V}_{k},\bb{W}_{k}]\gets ${\tt castdown}($[\bb{V}_{k},\bb{W}_{k}]$)\;
		$[\bb{C}_{k+1}\cdots\bb{C}_{N}]$ -= $\bb{V}_{k} \left(\bb{W}_{k}^{\top}[\bb{C}_{k+1}\cdots\bb{C}_{N}]\right) $ \tcc*{returned in low precision}
	}
	$\bb{Q}\gets \bb{I}$\tcc*{Build $\bb{Q}$: $\bb{I}_m$ if full QR, and $\bb{I}_{m\times n}$ if thin QR.}
	\For{$k=N:-1:1$}{
		\tcp{All updates are returned in low precision.}
		$\bb{Q}[(k-1)r+1:m,(k-1)r+1:n]$-= $\bb{W}_k \left(\bb{V}_k^{\top}\bb{Q}[(k-1)r+1:m,(k-1)r+1:n]\right)$
	}
	\Return $\bb{Q},\bb{A}$
	\caption{\label{algo:mpBQR} $\hat{\bb{Q}}_{mpBQR3},\hat{\bb{R}}_{mpBQR3}\gets {\tt mpBQR3}(\bb{A}, r)$: Perform a mixed precision variant of BQR on low precision $\bb{A}$ with column partitions of size $r$. $\hat{\bb{Q}}_{mpBQR3}$,$\hat{\bb{R}}_{mpBQR3}$, are returned in low precision. Operations in lines 7 and 10 require low precision inputs.}
\end{algorithm2e}

Since $\hat{\bb{W}}_{k},\hat{\bb{Y}}_k$'s in \cref{algo:buildWY} are computed in high precision and then cast down, the new low precision WY update is $\hat{\bb{X}}_{k}^{(l)} = \bb{I}-\bb{I}^{(l)}\hat{\bb{W}}_k\bb{I}^{(l)}\hat{\bb{V}}_k^{(\top)}$.
Consider applying $\hat{\bb{X}}_k^{(l)}$ to some matrix $\bb{B}$ stored in low precision using the TensorCore bFMAs.
We analyze a single column $\bb{b}_j:=\bb{B}[:,j] \in \F_l^{m-(k-1)r}$ even though this operation is done on $\bb{B}$ as a whole.
Let $\bb{I}^{(l)}\hat{\bb{W}}_k = (\bb{I}+\bb{E}_W)\hat{\bb{W}}_k$ and $\bb{I}^{(l)}\hat{\bb{Y}}_k = (\bb{I}+\bb{E}_Y)\hat{\bb{Y}}_k$, where $\bb{E}_W,\bb{E}_Y$ are diagonal and bounded componentwise by $u^{(l)}$.
Then,the Frobenius norm error of forming $\hat{\bb{X}}_{k}^{(l)}$ is,
\begin{align*}
	\|\hat{\bb{X}}_{k}^{(l)}- \bb{X}_{k}\|_F  &= \|-\left(\bb{I}+\bb{E}_W+\bb{E}_Y+ \bb{E}_W\bb{E}_Y\right)\hat{\bb{W}}_k\hat{\bb{Y}}_k^{\top} + \bb{W}_k\bb{Y}_k^{\top}\|_F,\\
	&\leq \left((1+\gamma_2^{(l)}+(u^{(l)})^2)r\tilde{\gamma}_{m-(k-1)r}^{(h)}+\gamma_2^{(l)}+(u^{(l)})^2\right)\|\bb{X}_k\|_F\\
	&\leq \tilde{\gamma}_2^{(l)} +r\tilde{\gamma}_{m-(k-1)r}^{(h)} + r\tilde{\gamma}_2^{(l)}\tilde{\gamma}_{m-(k-1)r}^{(h)}.
\end{align*}
Now, we consider the backward error of applying $\hat{\bb{X}}_{k}^{(l)}$ to $\bb{b}_j$ with the bFMA matrix product error bound from \cref{eqn:bFMAerr}.
The multiplication by $(\bb{I}^{(l)}\hat{\bb{Y}}_k)^{\top}$ yields backward error bounded by
\begin{equation*}
	\fl_{TC}((\bb{I}^{(l)}\hat{\bb{Y}}_k)^{\top}\bb{b}_j) = (\hat{\bb{Y}}_k+\Delta_{TC}\hat{\bb{Y}}_k)\bb{b}_j,\;\;|\Delta_{TC}\hat{\bb{Y}}_k| \leq u^{(l)}+\gamma_{\frac{m-(k-1)}{4}}^{(h)}+u^{(l)}\gamma_{\frac{m-(k-1)}{4}}^{(h)}|\hat{\bb{Y}}_k||\bb{b}_j|,
\end{equation*}
and the subsequent multiplication by $(\bb{I}^{(l)}\hat{\bb{W}}_k)$ and subtraction from $\bb{b}_j$ result in,
\begin{align*}
	\fl_{TC}(\hat{\bb{X}}_{k}^{(l)}\bb{b}_j) &= (\hat{\bb{X}}_{k}^{(l)}+\Delta^{(l)}\bb{X}_k)\bb{b}_j,\\
	|\Delta^{(l)}\bb{X}_k| &\leq \left(\gamma_2^{(l)}+\gamma_{1+\frac{m-(k-2)r}{4}}^{(h)}+\gamma_2^{(l)}\gamma_{1+\frac{m-(k-2)r}{4}}^{(h)}\right)\left(|\bb{b}_j|+|\bb{I}^{(l)}\hat{\bb{W}}_k||\bb{I}^{(l)}\hat{\bb{Y}}_k|^{\top}|\bb{b}_j|\right).
\end{align*}
Converting to a normwise error bound using the same logic from \cref{eqn:applyP,eqn:19.2c} results in
\begin{equation}
\|\fl_{TC}(\hat{\bb{X}}_{k}^{(l)}\bb{b}_j)-\bb{X}_k\bb{b}_j\|_2 \leq (\tilde{\gamma}_2^{(l)} +r\tilde{\gamma}_{m-(k-1)r}^{(h)} + r\gamma_2^{(l)}\tilde{\gamma}_{m-(k-1)r}^{(h)}) \|\bb{b}_j\|_2, 
\end{equation}
since the rounding errors from the bFMAs are small in comparison to the errors from casting down the WY representation built in high precision.
The corresponding matrix error bound is
\begin{equation}
	\|\fl_{TC}(\hat{\bb{X}}_{k}^{(l)})-\bb{X}_k\|_F \leq \tilde{\gamma}_2^{(l)} +r\tilde{\gamma}_{m-(k-1)r}^{(h)} + r\tilde{\gamma}_2^{(l)}\tilde{\gamma}_{m-(k-1)r}^{(h)}.\label{eqn:mpdeltX}
\end{equation}
We can finally compute the forward errors from implementing \cref{algo:mpBQR}.
Consider the $j^{th}$ column of the $\bb{Q}$ factor, which we denote with $\bb{q}_j:=\hat{\bb{Q}}_{mpBQR3}[:,j]$, and let $k = \lfloor j/r\rfloor$.
Invoking \cref{lem:3.7} with error bounds for $\fl_{TC}(\hat{\bb{X}}_k^{(l)})$'s in \cref{eqn:mpdeltX} results in columnwise error,
\begin{align}
	\|\Delta \bb{q}_j \|_2 &\leq -1 + \prod_{k'=1}^k (1+\tilde{\gamma}_2^{(l)})(1+r\tilde{\gamma}_{m-(k'-1)r}^{(h)})\\ 
	&\leq k\tilde{\gamma}_{2}^{(l)} + kr\tilde{\gamma}_m^{(h)} + k^2r\tilde{\gamma}_{2}^{(l)}\tilde{\gamma}_m^{(h)}, \label{eqn:mpBQRcol}
\end{align} 
where $\Delta \bb{q}_j = (\fl_{TC}(\hat{\bb{X}}_1^{(l)})\cdots\fl_{TC}(\hat{\bb{X}}_k^{(l)}) - \bb{X}_1\cdots\bb{X}_k )\hat{\bb{e}}_j.$
Summing over the columns to find a matrix norm error bound yields
\begin{equation}
	\|\hat{\bb{Q}}_{mpBQR}-\bb{Q}\|_F \leq n^{1/2}\left(\tilde{\gamma}_{N}^{(l)} + n\tilde{\gamma}_m^{(h)}\right),\label{eqn:mpBQR3err}
\end{equation}
where the summation of the third term in \cref{eqn:mpBQRcol} is swept under the tilde notation in $n^{1/2} \tilde{\gamma}_{N}^{(l)}$.
This bound shows that \cref{algo:mpBQR} only adds $n^{1/2}\tilde{\gamma}_{N}^{(l)}$ order errors to the bounds in \cref{eqn:BQRmat}.
Using that $u^{(l)}=M_{l,h}u^{(h)}$, this increase corresponds to a multiplicative factor shown below,
\begin{equation}
	n^{1/2}\tilde{\gamma}_{N}^{(l)} + n^{(3/2)}\tilde{\gamma}_m^{(h)} \approx \left(1+\frac{M_{l,h}}{rm}\right)n^{(3/2)}\tilde{\gamma}_m^{(h)}. \label{eqn:mpBQR3}
\end{equation}
In practice, we expect hardware specifications to restrict $r$ between $\cO(10)$ and $\cO(100)$ and fix finite options for $u_l$, $u_h$, and therefore $M_{l,h}$.
This helps narrow our analysis. 
In general, the loss in accuracy due to mixed precision computing is relatively small when the disparity in precision ($M_{l,h}$) is small in comparison to the block size, $mr$.
As we can assume $r$ to be fixed at $\cO(100)$ at the largest, the loss of accuracy attributed to mixed precision computing grows small as $m$ grows large, indicating that {\tt mpBQR3} can yield accuracy comparable to its high, uniform precision variant while still benefiting from speed-ups. 
Next, note that $r=1$ and $r=n$ both revert to HQR with only level-2 BLAS operations, and other values of $r$ determine the proportion of level-2 and level-3 BLAS operations.
The optimal blocksize only needs to be searched within the permissible ranges for $r$ and $M_{l,h}$.
Overall, the actual trade-off between speed-ups and accuracy from using mixed precision hardware for the QR factorization is an open question that can be tackled more comprehensively in future research.
Our analysis shows that the worst-case bounds depend on block width $r$, the dimension of the input matrix $m,n$, as well as hardware specificities. 

\subsubsection{TSQR round down at block level: {\tt mpTSQR3}}\label{sec:mp-3t}
Unlike BQR which is rich in level-3 BLAS operations, the variant of TSQR in \cref{algo:par_tsqr} uses none.
Therefore, we modify \cref{algo:par_tsqr} by replacing all instances of {\tt hh\_mult} with level-3 BLAS operations.
We omit presenting the exact algorithm for mixed precision variant of TSQR in this paper, but consider computing the HQR of each block in high precision and build and store the WY representation of the HH transformations in low precision as we did in lines (3-6) of \cref{algo:mpBQR}.
The low precision WY representation is then applied with TensorCore bFMAs when building the $\bb{Q}$ factor (lines 11-16 of \cref{algo:par_tsqr}).
\paragraph{Rounding Error analysis} The analysis in \cite{Mori2012} shows that each column of $\bb{Q}$ is transformed by $n$ HH transformations of length $2n$ from levels $L:-1:1$, and another set of $n$ HH transformations of length $2^{-L}m$ at level $0$.
Let us represent the WY representation at the $j^{th}$ block of level $i$ and its bFMA counterpart as $\bb{X}_j^{(i)}$ and $\fl_{TC}(\hat{\bb{X}}_j^{(i)})$.
Then, we can use \cref{eqn:mpdeltX} to form backward error  
\begin{equation}
	\|\fl_{TC}(\hat{\bb{X}}_j^{(i)})-\bb{X}_j^{(i)})\|_F \leq \tilde{\gamma}_2^{(l)} +n\tilde{\gamma}_{m'}^{(h)} + n\tilde{\gamma}_2^{(l)}\tilde{\gamma}_{m'}^{(h)}, \;\; m' = \begin{cases}
	2^{-L}m, &i=0\\
	2n, & i = 1 : L
	\end{cases}.
\end{equation}
We can now modify the analysis in \cite{Mori2012} by replacing $n\tilde{\gamma}_{2^{-L}m}$ and $n\tilde{\gamma}_{2n}$ with \[(1+\tilde{\gamma}_2^{(l)})(1+n\tilde{\gamma}_{2^{-L}m}^{(h)})-1,\quad\text{and}\quad (1+\tilde{\gamma}_2^{(l)})(1+n\tilde{\gamma}_{2n}^{(h)})-1,\]
and apply \cref{lem:3.7}.
Then, the factors formed by {\tt mpTSQR3} are denoted by $\hat{\bb{R}}_{mpTSQR3},\hat{\bb{Q}}_{mpTSQR3}$ and the error bounds for the $j^{th}$ column of the triangular factor and the orthogonal factor are
\begin{align*}
\|(\hat{\bb{R}}_{mpTSQR3}- \bb{R})[:,j]\|_2 &\leq \tilde{\gamma}_{L+1}^{(l)}+n\left(L\tilde{\gamma}_{2n}^{(h)}+\tilde{\gamma}_{2^{-L}m}^{(h)}\right)\|\bb{A}[:,j]\|_2\label{eqn:mpTSQR1},\\
	\|\hat{\bb{Q}}_{mpTSQR3} - \bb{Q}\|_F &\leq n^{1/2}\tilde{\gamma}_{L+1}^{(l)}+n^{3/2}\left(L\tilde{\gamma}_{2n}^{(h)}+\tilde{\gamma}_{2^{-L}m}^{(h)}\right).
\end{align*}
Converting the low precision rounding errors as a fraction of the TSQR error bound in \cref{eqn:tsqrQ} to quantify the impact of modifying \cref{algo:par_tsqr} to utilize bFMAs yields
\begin{equation}
	n^{1/2}\tilde{\gamma}_{L+1}^{(l)}+n^{3/2}\left(L\tilde{\gamma}_{2n}^{(h)}+\tilde{\gamma}_{2^{-L}m}^{(h)}\right) = \left(1+ \frac{M_{l,h}(L+1)}{n(2^{-L}m+2nL)}\right)n^{3/2}\left(L\tilde{\gamma}_{2n}^{(h)}+\tilde{\gamma}_{2^{-L}m}^{(h)}\right).\label{eqn:mpTSQR3}
\end{equation}
Like in \cref{eqn:mpBQR3}, the disparity in the two precisions, $M_{l,h}$ is compared against the original matrix size $m,n$ and the block size specifications derived from $L$.
Let us consider the shallowest, middle, and the deepest levels of TSQR that are possible given some matrix in $\R^{m\times n}$.
All three cases in \cref{table:mpTSQR3} show that {\tt mpTSQR3} on sufficiently large matrices may yield errors closer to the high precision implementation, and the optimal choice for $L$ depends on $m,n$. 
\begin{table}[H]
	\center
	\begin{tabular}{||c|c|c|c||} 
		\hline
		 Number of levels, $L$& $1$ & $\frac{1}{2}\log_2(m/n)$ & $-1+\log_2(m/n)$ \\ \hline
		$\frac{(L+1)}{n(2^{-L}m+2nL)}$&  $1/(n^2+m/4)$ &$1/\left(2n^2+\frac{m^{1/2}n^{3/2}}{\log_2(m/n)}\right)$ & $1/(2n^2)$\\ \hline
	\end{tabular}
	\caption{Error bounds for $\|\Delta \bb{Q}_{mpTSQR3}\|_F$ for varying $L$'s.} 
	\label{table:mpTSQR3}
\end{table} 
\vspace{-1cm}
Finally, the error bounds for the $\bb{Q}$ matrix formed from implementing all of the algorithms discussed in \cref{sec:mpanalysis,sec:algo} are summarized below in \cref{table:summary}.
\begin{table}[H]
	\center
	\begin{tabular}{|p{0.75in}||p{1.3in}|p{1.5in}|p{1.5in}|}\hline
 Algorithm& Uniform & Mixed BLAS-2 (\S\ref{sec:mp-2}) & Mixed BLAS-3 (\S\ref{sec:mp-3})\\
 \hline\hline
 HQR (\S\ref{sec:HQR}) & $n^{3/2}\tilde{\gamma}_{m}$ & $n^{1/2}(\tilde{\gamma}_{10n}^{(l)}+n\tilde{\gamma}_{m}^{(h)})$ & n/a \\\hline
 BQR& $n^{3/2}\tilde{\gamma}_{m}$ & $n^{1/2}\left(\tilde{\gamma}_{10N}^{(l)}+n\tilde{\gamma}_{m}^{(h)}\right)$ & $n^{1/2}\left(\tilde{\gamma}_{N}^{(l)} + n\tilde{\gamma}_m^{(h)}\right)$ \\\hline
 TSQR & $n^{3/2}(\tilde{\gamma}_{m2^{-L}}+L\tilde{\gamma}_{2n})$ &  \begin{tabular}{@{}c@{}}$n^{1/2}\left[\right.\tilde{\gamma}_{10(L+1)}^{(l)}$\\ $+n\left(L\tilde{\gamma}_{2n}^{(h)}+\tilde{\gamma}_{m2^{-L}}^{(h)}\right)\left.\right]$\end{tabular} & \begin{tabular}{@{}c@{}}$n^{1/2}\left[\right.\tilde{\gamma}_{L+1}^{(l)}$ \\ $+n\left(L\tilde{\gamma}_{2n}^{(h)}+\tilde{\gamma}_{2^{-L}m}^{(h)}\right)\left.\right]$\end{tabular}\\
 \hline
\end{tabular}
	\caption{Error bounds for $\|\Delta \bb{Q}\|_F$ for all algorithms in \cref{sec:algo,sec:mpanalysis} for a matrix with $m$ rows and $n$ columns. The TSQR algorithms are performed with $L$ levels, and the BQR algorithms are performed with $N$ blocks. Low precision is denoted with $l$, and high precision, $h$.} 
	\label{table:summary},
\end{table} 
\vspace{-1cm}

Unsurprisingly, the inner product mixed precision setting from \cref{sec:mp-2} yields higher error bounds (see \cref{thm:feHQR}) as it uses more low precision arithmetic than the settings used in \cref{sec:mp-3}. 
For example, the error bound for {\tt mpBQR3} of \cref{sec:mp-3b} yielded low precision errors $r$ times smaller than that of {\tt mpBQR2} of \cref{sec:mp-2} , as intermediate results are cast down more often in {\tt mpBQR2}.
Therefore, guarantees of numerical stability of {\tt mpBQR2} are limited to smaller matrix sizes when compared with those of {\tt mpBQR3} and BQR in high precision.
While it is technically possible that the low precision errors introduced from utilizing \cref{assump:mp} do not dominate the errors incurred in {\tt mpBQR2} and {\tt mpHQR2} when $m\gg M_{l,h}$ and can result in accuracy comparable to that of {\tt mpBQR3} and high precision BQR, our numerical results in \cref{sec:NE} show that {\tt mpHQR2} is already unstable at $m\approx M_{l,h}$.

\section{Numerical Experiments}\label{sec:NE}
We conducted several numerical experiments to confirm the validity of the error bounds formed in \cref{sec:mpanalysis} by varying matrix size for all algorithms, block sizes in {\tt mpBQR3}, and comparing {\tt mpHQR2} against {\tt mpTSQR2} with varying condition numbers.
We used Julia, a programming language which allows fp16 storage and {\tt castup} and {\tt castdown} operations between types in {fp16, fp32, fp64}, but no built-in fp16 arithmetic.
Therefore, we relied on using \cref{algo:simulate} for $f\in \text{OP} \cup\{{\tt dot\_product}\}$ to simulate \cref{assump:mp} and TensorCore bFMAs.\par

In \cref{sec:algo,sec:mpanalysis}, we gave the forward error bounds for $\bb{R}$ and $\bb{Q}$ separately. 
Since our numerical experiments instead measure a backward error, $\|\hat{\bb{Q}}\hat{\bb{R}}-\bb{A}\|_F$, and an orthogonal error, $\|\hat{\bb{Q}}^{\top}\hat{\bb{Q}}-\bb{I}\|_2$, we show how to convert general forward errors into those computed quantities.
Given $\|(\hat{\bb{R}}-\bb{R})[:,j]\|_2\leq \epsilon_R \|\bb{A}[:,j]\|_2$ and $\|\hat{\bb{Q}}-\bb{Q}\|_F\leq \epsilon_Q$,
\begin{align}
	\|(\hat{\bb{Q}}\hat{\bb{R}}-\bb{A})[:,j]\|_2 &\leq (\epsilon_R+\epsilon_Q + \epsilon_R\epsilon_Q)\|\bb{A}[:,j]\|_2,\;\; j=1:n,\quad \text{see \cite{Higham2002}},\\
	\|\hat{\bb{Q}}\hat{\bb{R}}-\bb{A}\|_F &\leq n^{1/2}(\epsilon_R+\epsilon_Q + \epsilon_R\epsilon_Q)\|\bb{A}\|_F, \label{eqn:QRA} \\
	\|\hat{\bb{Q}}^{\top}\hat{\bb{Q}}-\bb{I}\|_2 &\leq \|\hat{\bb{Q}}^{\top}\hat{\bb{Q}}-\bb{I}\|_F \simeq 2\epsilon_Q,\quad\text{see \cite{Mori2012}} \label{eqn:QQI}.
\end{align}
First, we tested \cref{algo:hhQR,algo:blockHQR,algo:par_tsqr,algo:mpBQR}, {\tt mpHQR2}, {\tt mpBQR2}, and {\tt mpTSQR2} for varying matrix sizes.
We increased the number of rows $m$ from $1000$ to $13949$, while keeping $n=250$, $r=63$, and $L=2$ and the test matrices were sampled from the standard normal distribution.
On the left plot of \cref{fig:sizemp3}, we see three clusters which each correspond to: top, \cref{assump:mp}; middle, TensorCore bFMAs; and bottom, uniform precision implementations in fp32.
The high precision and bFMA implementations scale similarly to each other when increasing the matrix size, whereas the \cref{assump:mp} variants grow unstable more quickly.
In addition, while HQR, BQR, and TSQR perform similarly in high precision and when using bFMAs, {\tt mpTSQR2} is less accurate by a quarter to a half order of magnitude in comparison to {\tt mpBQR2} and {\tt mpHQR2}.
The specifications for $m,n,L,M_{l,h}$ for this experiment derive the upper bound for $\|\Delta \bb{Q}_{mpTSQR2}\|_F$, \cref{eqn:mptsqr2Q}, to be larger than that of $\|\Delta \bb{Q}_{mpHQR2}\|_F$, \cref{eqn:mpHQR2Q}.
However, a more careful comparison of {\tt mpHQR2} and {\tt mpTSQR2} show that there exists a regime where {\tt mpTSQR2} can outperform {\tt mpHQR2}.
\begin{figure}[h!]
	\centering
	\vspace{-10pt}
	\includegraphics[width=\textwidth]{./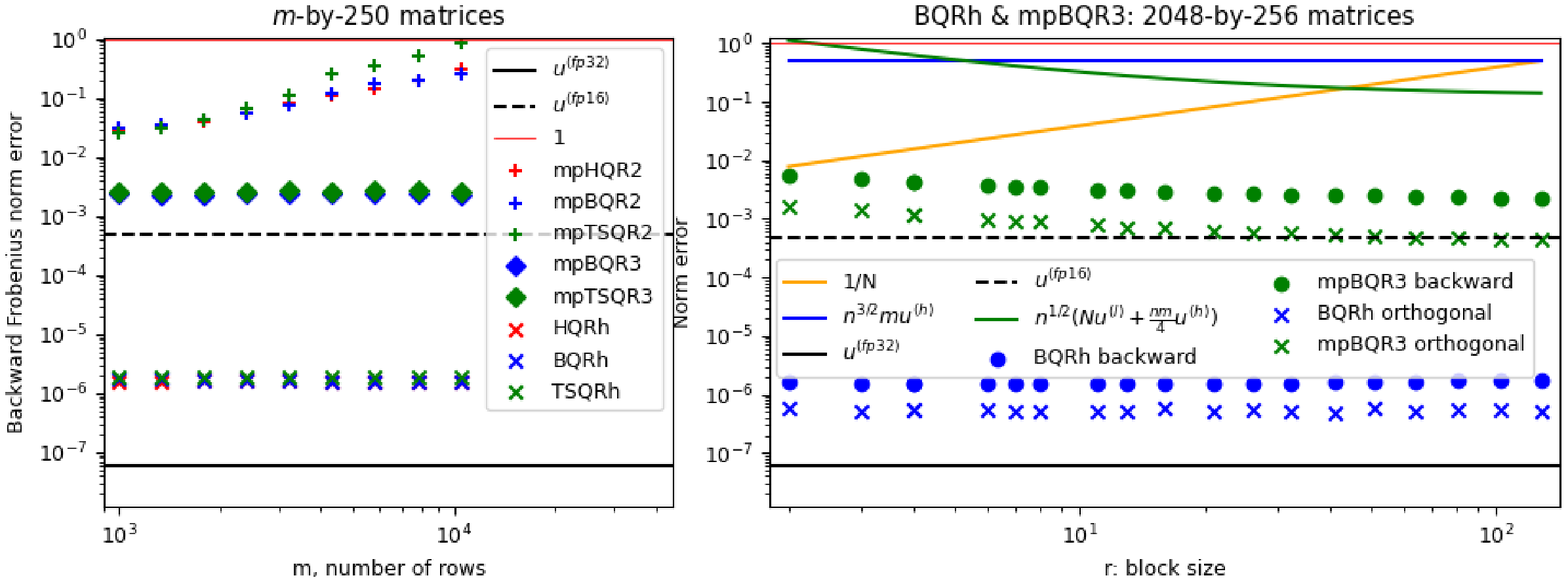}
	\vspace{-22pt}
	\caption{\label{fig:sizemp3}Left plot: Backward errors of HH QR factorization algorithms in \cref{sec:algo,sec:mpanalysis} with varying matrix sizes.
		Right plot: Norm errors of fp32 BQR and {\tt mpBQR3} for $2048$-by-$256$ matrices for varying block sizes.}
	\vspace{-10pt}
\end{figure} 

Next, we varied the block sizes for performing fp32 BQR and {\tt mpBQR3} on $2048$-by-$256$ sized matrices, which were chosen to yield error bounds below 1 for both algorithms.
The right plot of \cref{fig:sizemp3} shows the error bounds and the computed value for the backward error for the two algorithms where the block size $r$ varies from $2$ to $256$. 
The test matrices were generated following example from \cite{Blanchard2020} by setting $\bb{A}={\tt castdown}(\bb{Q}_1\bb{D}\bb{Q}_2)$ where $\bb{Q}_1\in\F_h^{m\times n}$, $\bb{Q}_2\in\F_h^{n\times n}$ are orthogonal and $\bb{D}=\mathrm{Diagonal}(\{\log_{10}(0),\cdots, \log_{10}(-3)\})\in\F_h^{n\times n}$.  
The high precision implementation yields backward error close to $u^{(fp32)}$ and {\tt mpBQR3} yields errors near $u^{(fp16)}$ that follows the downward trend suggested by \cref{eqn:mpBQR3}.
As block sizes increase, {\tt mpBQR3} grows more accurate. 
This trend correlates to $1/N$, the approximate fraction of FLOPs in {\tt mpBQR3} performed in high precision, marked in orange.
However, the rightmost data for {\tt mpBQR3} (corresponds to $r=n$), is still between 3 and 4 orders of magnitude less accurate than its high precision variant. 
Further studies that directly test speed-ups from bFMAs against the accuracy of {\tt mpBQR3} are needed to fully understand the potential uses for mixed precision QR algorithms.

Lastly, we compared a mixed precision variant of a communication-avoiding QR algorithm ({\tt mpTSQR2}) against a mixed precision variant of the HH QR algorithm ({\tt mpHQR2}) on a set of fixed-size matrices with varying condition numbers.
Note that an empirical comparison of the two algorithms implemented in fp64 arithmetic were reported in \cite{Mori2012}, and we omit the comparison against {\tt mpBQR2} since it performs very similarly to {\tt mpHQR2}.
Following example from \cite{Mori2012}, we used $m$-by-$n$ random matrices, $\bb{A}_{\alpha} = \bb{Q'}(\alpha \bb{E} + \bb{I})/\|\bb{Q'}(\alpha \bb{E} + \bb{I})\|_F$, where $\bb{Q'}\in\mathbb{R}^{m\times n}$ is orthogonal and $\bb{E}\in\R^{n\times n}$ is the matrix of $1$'s. 
We constructed $\bb{Q'}$ by computing the default QR factorization of matrix $\bb{\Omega}\in\F_{fp64}^{4000\times100}$ in Julia, which performs BQR with $r=36$ entirely in fp64 arithmetic, and elements of the random matrix $\bb{\Omega}$ were sampled from the uniform distribution over $[0,1]$.
By construction, $\bb{A}_{\alpha}$ has 2-norm condition number $n\alpha+1$. 
By varying $\alpha$ from {\tt 1e-4} to {\tt 1}, we varied the condition number from $1.1$ to $101$, and we generated $10$ samples for each value of $\alpha$.
Even though the condition number is not a part of our rounding error analysis, we use it in this experiment in an attempt to reach the ``worst-case'' scenario described by our deterministic error bounds in \cref{sec:mpanalysis}.
The relative backward error, $\|\hat{\bb{Q}}\hat{\bb{R}}-\bb{A}\|_F/\|\bb{A}\|_F$, was computed by casting up $\hat{\bb{Q}}$, $\hat{\bb{R}}$, and $\bb{A}$ to fp64 to compute the Frobenius norms.
Plugging in $m=4000$, $n=100$, $u^{(l)}=u^{(fp16)}$, $u^{(h)}=u^{(fp32)}$, and $c=1$ (for $\tilde{\gamma}$) into the error bounds for {\tt mpHQR2} combined with \cref{eqn:QRA,eqn:QQI} are approximately {\tt 1.179} and {\tt 1.146}.
These error bounds are \emph{relative} and these worst-case bounds do not guarantee errors below 100\%.
The TSQR bounds for the same parameters for $L=1:5$ are even larger, which indicates that stability is not guaranteed. 
The leftmost plot of \cref{fig:allTSQR} shows the backward errors of {\tt mpHQR2} increasing as the theoretical condition numbers of the generated random matrices increase, and these errors correspond to the error data on the vertical axis, $L=0$, of the middle plot.
In addition to the errors from {\tt mpHQR2}, Figure~\ref{fig:allTSQR} shows the errors from {\tt mpTSQR2s} of levels varying from $L=1$ to $L=5$, where each line represents the errors of HQR and variants of TSQR calculated from the same random test matrix.
Figure~\ref{fig:allTSQR} reveals two different trends for the errors as we deepen the complexity of the QR algorithm from {\tt mpHQR2} to {\tt mpTSQR2} with $L=5$, and these two trends are separated into the center and right plots. 
One trend occurs for matrices with smaller condition numbers, where {\tt mpHQR2} is stable, but {\tt mpTSQR2} with higher levels yield larger errors. 
Another trend occurs for matrices with higher condition numbers, where single-level and 2-level {\tt mpTSQR2} yield smaller errors than {\tt mpHQR2}. 
To make these two trends clear, we have used green segments to show when a TSQR with one more level yields smaller errors, and blue segments to show when a TSQR with another level yields larger errors.
In these cases, errors from {\tt mpTSQR2} with 3 or more levels are similar to or worse than their 2-level variants, but generally do not exceed those of {\tt mpHQR2} most of the times.
These results suggests that TSQR can outperform HQR in mixed precision settings, and particularly when HQR suffers from accumulated rounding errors.
\begin{figure}[h!]
	\centering
	\includegraphics[width=\textwidth]{./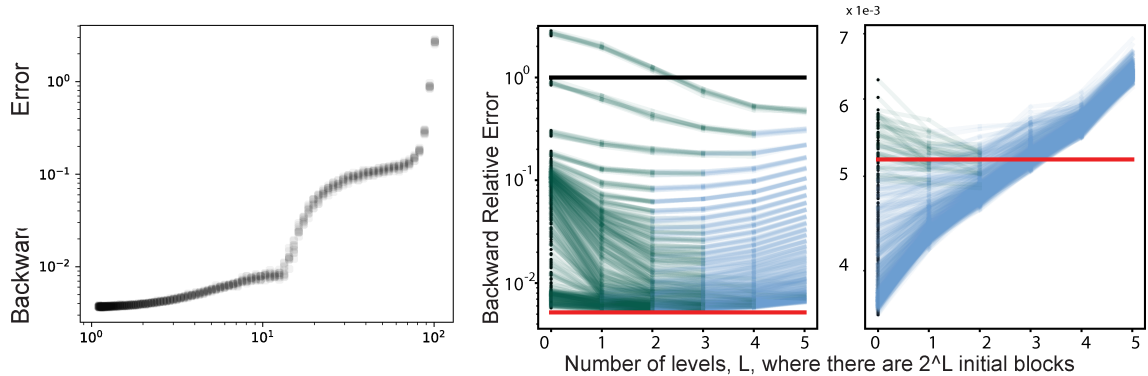}
	\vspace{-15pt}
	\caption{\label{fig:allTSQR} All plots show the backward relative error for 4000-by-100 sized test matrices. Left: {\tt mpHQR2} on condition numbers ranging from 1.1 to 101;  Middle: {\tt mpTSQR2} on condition numbers ranging from 5.3 to 101; Right:  {\tt mpTSQR2} on condition numbers ranging from 1.1 to 5.3.  The red line on both plots corresponds to value {\tt 5.2e-3},and the green/blue segments show a decrease/increase in error in comparison to using TSQR with one fewer level. }
	\vspace{-5pt}
\end{figure}

In conclusion, most of the experiments display the trends that error bounds in \cref{sec:algo,sec:mpanalysis} suggest, and bFMA variants perform in between the high precision and \cref{assump:mp} variants as expected.
Also, a special case is shown that demonstrate {\tt mpTSQR2} can outperform {\tt mpHQR2} in accuracy despite having higher error bounds.
All of the experiments showed that the actual errors were many orders of magnitude lower than the error bounds even when ill-conditioned, but this discrepancy varied for different mixed precision settings.
For example, backward and forward errors of {\tt mpBQR3} were \emph{only} 2-3 orders of magnitude below the error bounds, whereas the fp32 implementation of BQR yielded errors up to 6 orders of magnitude below the error bounds.
Although further studies with larger problem sizes and timings would be beneficial in developing an {\tt mpBQR3} with the optimal block size, $r$, our experiments confirm the intuition built from the error analysis in \cref{sec:mpanalysis}.
\section{Conclusion}
The development of GPUs that optimize low precision floating point arithmetic have accelerated the interest in half and mixed precision algorithms that naturally reduces the bandwidth and storage needs. 
Loss in precision, stability, and representable range offset for those advantages, but these shortcomings may have little to no impact in some applications.
It may even be possible to navigate around those drawbacks with algorithmic design. \par
We present the algorithm and standard error analysis of HQR and its blocked variants (BQR and TSQR), modify the algorithms to support two mixed precision settings, and performed error analyses that bound the mixed precision versions.
One mixed precision setting is that of NVIDIA's TensorCore bFMAs, and the other is an ad hoc setting that mimics the bFMAs at the level of inner products.
These two are presented to offer mixed precision arithmetic at both level-2 and 3 BLAS operations and can be applied to other linear algebra tools as well.
The new error bounds more accurately describe how rounding errors are accumulated in mixed precision settings.
For a given problem, available hardware, and some error tolerance, these bounds can be used to first narrow down which QR factorization algorithms are feasible. 
Then, the speed-ups from the hardware specifications can be considered next to choose the most appropriate settings within the algorithms (i.e. block size $r$ in BQR or number of levels, $L$, in TSQR).
We found that TSQR can outperform HQR under \cref{assump:mp} for ill-conditioned, extremely overdetermined cases even when the error bounds imply the opposite.
While an optimistic interpretation of this result would be that algorithms like TSQR are more robust against lower precision arithmetic, further research is needed to explore other divide-and-conquer methods that can harness parallel capabilities.
Meanwhile, we should rely on the error bounds formed in \cref{sec:mpanalysis}.
\section*{Acknowledgements}
The authors are grateful to the anonymous reviewers for their constructive criticisms, which improved the clarity of the presentation.
\bibliographystyle{siamplain}
\bibliography{library.bib,sans_library.bib,./report.bib}

\end{document}